\newcommand{\E}{\mathbb{E}}
\newcommand{\R}{\mathbb{R}}
\newcommand{\N}{\mathbb{N}}
\newcommand{\T}{\mathbb{T}}
\newtheorem{theo}{Theorem}[section]
\newtheorem{rem}[theo]{Remark}
\newtheorem{propo}[theo]{Proposition}
\newtheorem{lemma}[theo]{Lemma}
\newtheorem{hyp}[theo]{Assumption}
\begin{document}

\title{Analysis of an Adaptive Biasing Force method based on self-interacting dynamics}

\author[,1]{Michel Bena\"im \thanks{\texttt{michel.benaim@unine.ch}}}
\author[,2]{Charles-Edouard Br\'ehier \thanks{\texttt{brehier@math.univ-lyon1.fr}}}
\author[,3]{Pierre Monmarch\'e \thanks{\texttt{pierre.monmarche@sorbonne-universite.fr}}}

\affil[1]{Institut de Math\'ematiques, Universit\'e de Neuch\^atel, Switzerland}
\affil[2]{Univ Lyon, CNRS, Universit\'e Claude Bernard Lyon 1, UMR5208, Institut Camille Jordan, F-69622 Villeurbanne, France}
\affil[3]{Sorbonne-Université, CNRS, Université de Paris, Laboratoire Jacques-Louis Lions (LJLL), F-75005 Paris, France}

\maketitle

\begin{abstract}
This article fills a gap in the mathematical analysis of Adaptive Biasing algorithms, which are extensively used in molecular dynamics computations. Given a reaction coordinate, ideally, the bias in the overdamped Langevin dynamics would be given by the gradient of the associated free energy function, which is unknown. We consider an adaptive biased version of the overdamped dynamics, where the bias depends on the past of the trajectory and is designed to approximate the free energy.

The main result of this article is the consistency and efficiency of this approach. More precisely we prove the almost sure convergence of the bias as time goes to infinity, and that the limit is close to the ideal bias, as an auxiliary parameter of the algorithm goes to $0$.

The proof is based on interpreting the process as a self-interacting dynamics, and on the study of a non-trivial fixed point problem for the limiting flow obtained using the ODE method.
\end{abstract}

\section{Introduction}

Let $\mu_\star$ be a probability distribution on the $d$-dimensional flat torus $\T^d$, of the type:
\begin{equation}\label{eq:mu}
d\mu_\star(x)=\frac{e^{-\beta V(x)}}{Z(\beta)}dx~,\quad Z(\beta)=\int_{\T^d}e^{-\beta V(x)}dx,
\end{equation}
where $dx$ is the normalized Lebesgue measure on $\T^d$. For applications in physics and chemistry (e.g. in molecular dynamics), $\mu_\star$ is referred to as the Boltzmann-Gibbs distribution associated with the potential energy function $V$ and the inverse temperature parameter $\beta>0$. For applications in statistics (e.g. in Bayesian statistics), $-\beta V$ is referred to as the log-likelihood. In this article, the function $V:\T^d\to\mathbb{R}$ is assumed to be smooth.

In order to estimate integrals of the type $\int \varphi d\mu_\star$, with $\varphi:\T^d\to\mathbb{R}$, probabilistic methods are used, especially when $d$ is large. The Markov Chain Monte Carlo (MCMC) method consists in interpreting the integral as the (almost sure) limit
\[
\int \varphi d\mu_\star=\underset{T\to\infty}\lim~\frac{1}{T}\int_{0}^{T}\varphi(X_t^0)dt=\underset{T\to\infty}\lim~\int \varphi d\mu_T^0,
\]
where $\mu_t^0=\frac{1}{T}\int_0^T\delta_{X_t^0}dt$ is the random empirical distribution associated with an ergodic Markov process $\bigl(X_t^0\bigr)_{t\ge 0}$, with unique invariant distribution $\mu_\star$. The choice of the Markov dynamics is not unique, and in this work we consider the overdamped Langevin dynamics
\[
dX_t^0=-\nabla V(X_t^0)dt+\sqrt{2\beta^{-1}}dW_t
\]
where $\bigl(W_t\bigr)_{t\ge 0}$ is a $d$-dimensional Wiener process. In practice, discrete-time Markov processes, defined for instance using the Metropolis-Hastings algorithm, are employed.

The convergence to equilibrium requires that the Markov process explores the entire energy landscape, which may be a very slow process. Indeed, in practical problems, the dimension $d$, {\it i.e.} the number of degrees of freedom in the system, is very large, and the probability distribution $\mu_\star$ is multimodal: the function $V$ admits several local minima (interpreted as potential energy wells) and $\beta$ is large. In that situation, the Markov process is metastable: when it reaches an energy well, it tends to stay there for a long time (whose expectation goes to infinity when $\beta$ goes to infinity) before hopping to another energy well. Asymptotic results for the exit time from energy wells when $\beta\to\infty$ are given by Eyring-Kramers type formulas \cite{BdH_book,LelievreMetastable}. The metastability of the process substantially slows down the exploration of the energy landscape, hence the convergence when $T\to\infty$ towards the target quantity $\int \varphi d\mu_\star$.

In the development of Monte-Carlo methods in the last decades, many techniques have been studied in order to efficiently sample multimodal distributions. The bottom-line strategy to enhance sampling consists in biasing the dynamics and in reweighting the averages: indeed, for any smooth function $\tilde{V}:\T^d\to\mathbb{R}$, one has
\[
\int \varphi d\mu_\star=\frac{\int \varphi e^{-\beta V}}{\int e^{-\beta V}}=\frac{\int\varphi e^{-\beta(V-\tilde{V})}~e^{-\beta\tilde{V}}}{\int e^{-\beta(V-\tilde{V})}~e^{-\beta\tilde{V}}}=\underset{t\to\infty}\lim~\frac{\int_0^t \varphi(\tilde{X}_s)e^{-\beta(V(\tilde{X}_s)-\tilde{V}(\tilde{X}_s)}ds}{\int_0^t e^{-\beta(V(\tilde{X}_s)-\tilde{V}(\tilde{X}_s)}ds},
\]
where the biased dynamics is given by $d\tilde{X}_t=-\nabla \tilde{V}(\tilde{X}_t)dt+\sqrt{2\beta^{-1}}dW_t$. This is nothing but an Importance Sampling method, and choosing carefully the function $\tilde{V}$ may substantially reduce the computational cost. Indeed, if the distribution with density proportional to $e^{-\beta\tilde{V}(x)}$ is not multimodal, the biased process $\tilde{X}_t$ converges to equilibrium and explores the state space faster than the unbiased process $X_t$. In the sequel, we explain how to choose $\tilde{V}$ in order to benefit from the importance sampling strategy.

From now on, in order to simplify the notation, $\beta=1$. In addition, without loss of generality, assume that $\int_{\T^d} e^{-V(x)}dx=1$.

Instead of treating the problem in an intractable full generality, we focus on the typical situation when some additional a priori knowledge on the system is available. Precisely, let $\xi:\T^d\to \T^m$ be a smooth function, which is referred to as the reaction coordinate (following the terminology employed in the molecular dynamics community). Let us stress that the identification of appropriate reaction coordinates is a delicate question, which depends on the system at hand. The problem of automatic learning   of good reaction coordinates currently generates a lot of research, see for instance \cite{Schutte,Brandt} and references within. We do not consider this question in the sequel.

The biasing potential in the importance sampling schemes considered in this work will be of the type $\tilde{V}(x)=V(x)-A(\xi(x))$, where $A:\T^m\to\mathbb{R}$. In practice, the number of macroscopic variables $m$ is very small compared to the dimension $d$ of the model (which describes the full microscopic system). As will be explained below, without loss of generality, we assume that $\xi(x)=\xi(y,z)=z$ for all $x=(y,z)\in\T^{d-m}\times\T^m$. This expression for the reaction coordinate simplifies the presentation of the method, however considering more general reaction coordinates $\xi$ is possible up to adapting some definitions below. To explain the construction of the method and to justify its efficiency, we assume that the reaction coordinate is representative of the metastable behavior of the system: roughly, this means that only the exploration in the $z$ variable is affected by the metastability, whereas the exploration in the $y$ variable is much faster.

In this framework, the fundamental object is the free energy function $A_\star$ defined as follows: for all $z\in\T^m$,
\begin{equation}\label{eq:FE}
A_\star(z)=-\log\Bigl(\int_{\T^{d-m}}e^{-V(y,z)}dy\Bigr).
\end{equation}
For general considerations on the free energy and related computational aspects, we refer to \cite{LRS_book,LS_acta}. By construction, if $X=(Y,Z)$ is a random variable with distribution $\mu_\star$, then the marginal distribution of $Z$ is given by
\[
d\nu_\star(z)=e^{-A_\star(z)}dz.
\]
Introduce the notation $(Y_t^0,Z_t^0)=X_t^0$ for the solution of the overdamped Langevin dynamics
\[
\begin{cases}
dY_t^0=-\nabla_yV(Y_t^0,Z_t^0)dt+\sqrt{2}dW_t^{(d-m)},\\
dZ_t^0=-\nabla_zV(Y_t^0,Z_t^0)dt+\sqrt{2}dW_t^{(m)},
\end{cases}
\]
where $W_t=(W_t^{(d-m)},W_t^{(m)})$. It $\nu_t^0=\frac{1}{t}\int_0^t \delta_{Z_s^0}ds$ denotes the empirical distribution for the variable $Z^0$, then almost surely
\[
\nu_t^0\underset{t\to\infty}\to \nu_\star,
\]
in the sense of weak convergence in the set $\mathcal{P}(\T^m)$ of probability distributions on $\T^m$. Since the reaction coordinate is representative of the metastability of the system, this convergence shares the same computational issues as when considering the full process $X^0$.

A much better performance can be attained considering the following biased dynamics, where $V(x)$ is replaced by $\tilde{V}_\star(x)=V(x)-A_\star(\xi(x))$:
\[
\begin{cases}
dY_t^\star=-\nabla_yV(Y_t^\star,Z_t^\star)dt+\sqrt{2}dW_t^{(d-m)},\\
dZ_t^\star=-\nabla_zV(Y_t^\star,Z_t^\star)dt+\nabla A_\star(Z_t^\star)dt+\sqrt{2}dW_t^{(m)}.
\end{cases}
\]
Define the associated empirical measures on $\T^d$ and $\T^m$ respectively:
\[
\mu_t^\star=\frac{1}{t}\int_0^t\delta_{X_s^\star}ds~,\quad \nu_t^\star=\frac{1}{t}\int_0^t\delta_{Z_s^\star}ds,
\]
where $X_s^\star=(Y_s^\star,Z_s^\star)$. As explained above, $\int\varphi d\mu_\star$ can then be computed by the reweighting procedure. Observe that by ergodicity for $\bigl(X_t^\star\bigr)_{t\ge 0}$ and the definition of $A_\star$, one has
\[
\nu_t^\star\underset{t\to\infty}\to dz,
\]
{\it i.e.} at the limit the distribution of $Z_t^\star$ is uniform on $\T^m$. This observation, which is referred to as the {\it flat histogram property} in the literature devoted to applications, means that the process $X^\star$ does not suffer from slow convergence to equilibrium due to energy barriers, compared to the process $X^0$.

In practive, the free energy function $A_\star$ is not known, thus the ideal approach described above is not applicable. In fact, in many applications, the real objective is the computation of the free energy function. One of the important features of many free energy computation algorithms, such as the one studied in this work, is to compute an approximation of the free energy function on-the-fly, and to use this approximation to enhance sampling. Checking that such adaptive algorithms are efficient and consistent requires careful mathematical analysis.

In this article, we consider a class of adaptive biasing methods, where the dynamics is of the form
\begin{equation}\label{eq:ABF-intro}
\begin{cases}
dY_t=-\nabla_yV(Y_t,Z_t)dt+\sqrt{2}dW_t^{(d-m)},\\
dZ_t=-\nabla_zV(Y_t,Z_t)dt+\nabla A_t(Z_t)dt+\sqrt{2}dW_t^{(m)},
\end{cases}
\end{equation}
where the function $A_t$ depends on time $t$, approximates $A_\star$ when $t\to\infty$, and is defined in terms of the empirical measure
\begin{equation}\label{eq:mut-intro}
\mu_t=\frac{1}{t}\int_{0}^{t}\delta_{X_s}ds.
\end{equation}
The process $\bigl(X_t\bigr)_{t\ge 0}=\bigl(Y_t,Z_t\bigr)_{t\ge 0}$ is not a Markov process, instead it is a self-interacting diffusion process. The precise construction of the algorithm studied in this article is provided below.

This article is organized as follows. The construction of the algorithm~\eqref{eq:ABF} studied in this work is presented in Section~\ref{sec:ABF} below. The main result, Theorem~\ref{theo:Main}, is stated in Section~\ref{sec:main+discussion}, and a comparison with the literature is given. Section~\ref{sec:proof-wellposed} gives a proof of the well-posedness of the self-interacting dynamics~\eqref{eq:ABF} (Proposition~\ref{propo:wellposed}). Section~\ref{sec:ODE} exhibits the limiting flow (obtained by applying the ODE method) and establishes the asymptotic pseudotrajectory property. Finally, Section~\ref{sec:proof-main} provides the final crucial ingredients for the proof of the main result, Theorem~\ref{theo:Main}: a PDE estimate which provides some uniform bounds, and a global asymptotic stability property for the limiting flow.

\section{The Adaptive Biasing Force algorithm}\label{sec:ABF}

The objectives of this section are to define the Adaptive Biasing Force method \cite{ABF14} studied in this article, and to state the main results.

Recall the definitions~\eqref{eq:mu} and~\eqref{eq:FE} of the target distribution $\mu_\star$ and of the free energy $A_\star$ respectively. The potential energy function $V$ is assumed to be of class $\mathcal{C}^\infty$.

The reaction coordinate $\xi:\T^d\to\T^m$ satisfies $\xi(y,z)=z$ for all $x=(y,z)\in\T^d$. This expression substantially simplifies the presentation compared with a more general choice of $\xi:\T^d\to\R^m$. In applications, this is not restrictive, and consists in considering the so-called extended ABF algorithm \cite{ChipotEABF}. Precisely, an auxiliary variable $\mathcal{Z}$ is added to the state space, the extended potential energy function for $\overline{X}=(X,\mathcal{Z})$ is given by $\overline{V}(\overline{X})=V(X)+\frac{1}{2\sigma^2}|\xi(X)-\mathcal{Z}|^2$, where $\sigma>0$ is a small parameter, and one sets $\overline{\xi}(\overline{X})=\mathcal{Z}$.

\subsection{Construction}
The definition of the algorithm requires to make precise how in the evolution equation~\eqref{eq:ABF-intro}, the biasing potential function $A_t$, or its gradient $\nabla A_t$, is determined in terms of the empirical distribution $\mu_t$ given by~\eqref{eq:mut-intro}. The algorithm is based on the following identity: the gradient $\nabla A_\star$ of the free energy function $A_\star$ defined by~\eqref{eq:FE} is given by
\begin{equation}\label{eq:meanforce}
\nabla A_\star(z)=\frac{\int_{\T^{d-m}}\nabla_zV(y,z)e^{-V(y,z)}dy}{\int_{\T^{d-m}}e^{-V(y,z)}dy}=\E_{\mu_\star}\bigl[\nabla_zV(Y,Z)~\big|~Z=z\bigr].
\end{equation}
More generally, let $A:\T^m\to\mathbb{R}$ be a smooth function, and let $d\mu_\star^A(x)\propto e^{A(z)}d\mu_\star(y,z)$ be the ergodic invariant distribution of
\[
\begin{cases}
dY_t^A=-\nabla_yV(Y_t^A,Z_t^A)dt+\sqrt{2}dW_t^{(d-m)},\\
dZ_t^A=-\nabla_zV(Y_t^A,Z_t^A)dt+\nabla A(Z_t^A)dt+\sqrt{2}dW_t^{(m)}.
\end{cases}
\]
Then one has the identity 
\begin{eqnarray}\label{eq:nablaAstar}
\nabla A_\star(z)& =&\E_{\mu_\star^A}\bigl[\nabla_zV(Y,Z)~\big|~Z=z\bigr] .
\end{eqnarray}
The expressions for the gradient of the free energy function in equations~\eqref{eq:meanforce} and~\eqref{eq:nablaAstar} are simpler than (for instance) the expressions (5) and (6) in~\cite{LRS08} which hold for a general reaction coordinate mapping $\xi$, whereas we consider only the case $\xi(y,z)=z$.

The occupation measures $\mu_t$ defined by~\eqref{eq:mut-intro} are in general singular with respect to the Lebesgue measure on $\T^m$. In order to define the mapping $\mu_t\mapsto A_t$, we introduce a regularization kernel $K_\epsilon$, depending on the parameter $\epsilon\in(0,1]$, such that
\[
\nabla A_\star(z)=\underset{\epsilon\to 0}\lim~\frac {\iint_{\T^d}\nabla_zV(y,z')K_\epsilon(z',z)d\mu_\star(y,z')}{\iint_{\T^d}K_\epsilon(z',z)d\mu_\star(y,z')}.
\]
Indeed, formally, the expression~\eqref{eq:meanforce} for $\nabla A_\star$ is obtained with the kernel $K_\epsilon(z,z')$ replaced by a Dirac distribution $\delta(z-z')$. See Assumption~\ref{ass:kernel} below for precise conditions on the kernel function $K_\epsilon$.

For every $\epsilon\in(0,1]$ and $\mu\in\mathcal{P}(\T^d)$, define the mapping $F^\epsilon[\mu]:\T^m\to\mathbb{R}^m$ as follows:
\begin{equation}\label{eq:def-Fepsilon}
F^\epsilon[\mu](\cdot)=\frac{\iint \nabla_zV(y,z)K_\epsilon(z,\cdot)d\mu(y,z)}{\iint K_\epsilon(z,\cdot)d\mu(y,z)}.
\end{equation}
Due to the action of the regularization kernel $K_\epsilon$, in general $F^\epsilon[\mu]$ cannot be written as a gradient. For instance if $m=1$, a smooth function $F:\T\to\mathbb{R}$ is a gradient if and only if its average value is zero $\int F(z)dz=0$; in general, $\int F^\epsilon[\mu](z)dz\neq 0$.

The last ingredient in the construction is a projection operator $\mathbf{P}$, such that one defines $\nabla A^\epsilon[\mu]=\mathbf{P}(F^\epsilon[\mu])$. More precisely, for every $\epsilon\in(0,1]$ and $\mu\in\mathcal{P}(\T^d)$, define the mapping  $A^\epsilon[\mu]$ as follows:
\begin{equation}\label{eq:def-Aepsilon}
\begin{aligned}
A^\epsilon[\mu]&=\underset{A\in H^1(\T^m),\int A(z)dz=0}{\rm argmin}~\int\big|F^\epsilon[\mu](z)-\nabla A(z)\big|^2dz\,.
\end{aligned}
\end{equation}
As will be explained below, $A^\epsilon[\mu]$ is solution of an elliptic PDE. Note that $F^{\epsilon}[\mu]$ and $A^\epsilon[\mu]$ are functions depending only on $z\in\T^m$, with a dimension $m$ much smaller than $d$ the total number of degrees of freedom of the system. Typically, one has $m\in\{1,2,3\}$, which makes it possible to use the algorithm in practice.

\bigskip

We are now in position to define the process considered in this article: it is the solution of the system
\begin{equation}\label{eq:ABF}
\begin{cases}
dY_t=-\nabla_yV(Y_t,Z_t)dt+\sqrt{2}dW_t^{(d-m)},\\
dZ_t=-\nabla_zV(Y_t,Z_t)dt+\nabla A_t(Z_t)dt+\sqrt{2}dW_t^{(m)},\\
A_t=A^\epsilon[\mu_t],\\
\mu_t=\frac{1}{t}\int_0^t\delta_{(Y_s,Z_s)}ds.
\end{cases}
\end{equation}
Arbitrary (deterministic) initial conditions $Y_0=y_0\in\T^{d-m}$, $Z_0=z_0\in\T^m$, $\mu_0=\delta_{(y_0,z_0)}$ and $A_0=A^\epsilon[\mu_0]$ are provided. This process belongs to the class of self-interacting diffusions, see  \cite{BLR02,BR03,BR05,BR11} for standard references.

\subsection{Well-posedness of the system~\eqref{eq:ABF}}

Recall that $V:\T^d\to\R$ is assumed to be of class $\mathcal{C}^\infty$. Let us first state the assumptions satisfied by the kernel function $K_\epsilon$.
\begin{hyp}\label{ass:kernel}
For any $\epsilon\in(0,1]$, the mapping $K_\epsilon:\T^m\times\T^m\to (0,\infty)$ is of class $\mathcal{C}^\infty$ and positive.

For all $z\in\T^m$, one has
\[\int K_\epsilon(z,\cdot)dz=\int K_\epsilon(\cdot,z)dz=1\]
In addition, if $\psi:\T^d\to\R$ is a continuous and bounded function, one has
\[
\iint_{\T^d}\psi(y,z')K_\epsilon(z',z)dydz'\underset{\epsilon\to 0}\to \int_{\T^{d-m}}\psi(y,z)dy~,\quad \forall~z\in\T^m.
\]
Finally, there exists $c_K\in(0,\infty)$, such that
\[
\underset{z\in\T^m}\sup~\int_{\T^m}|z-z'|^2\bigl(K_\epsilon(z',z)+K_\epsilon(z,z')\bigr)dz'\le c_K\epsilon.
\]
\end{hyp}
Define $m_\epsilon=\underset{z,z'\in\T^m}\min~K_\epsilon(z',z)$ and $M_\epsilon^{(k)}=\underset{z,z'\in\T^m}\max~|\nabla_z^k K_\epsilon(z',z)|+\underset{z,z'\in\T^m}\max~|\nabla_{z'}^k K_\epsilon(z',z)|$, where $k$ is a nonnegative integer and $\nabla^k$ denotes the derivative of order $k$. Owing to Assumption~\ref{ass:kernel}, one has $m_\epsilon>0$ and $M_\epsilon^{(k)}<\infty$ for all $\epsilon\in(0,1]$, however these estimates are not uniform with respect to $\epsilon$, {\it i.e.} $\underset{\epsilon\in(0,1]}\inf~m_\epsilon=0$ and $\underset{\epsilon\in(0,1]}\sup~M_\epsilon^{(k)}=\infty$.

Note that to establish the well-posedness of the system~\eqref{eq:ABF}, where $\epsilon\in(0,1]$ is fixed, upper bounds are allowed to depend on $\epsilon$. However, it will be crucial in Section~\ref{sec:proof-main} to derive some upper bounds which are uniform with respect to $\epsilon$ in order to prove the convergence when $t$ goes to infinity of $\mu_t$ and $A_t$ (to a limit depending on $\epsilon$), see Proposition~\ref{propo:bound}.

The exact form of the kernel function $K_\epsilon$ has no influence on the analysis below. Let us give an example: let $K_\epsilon(z^1,z^2)=\prod_{j=1}^{m}k_\epsilon\bigl(z_j^2-z_j^1\bigr)$, where for all $z\in\T$,
\[
k_\epsilon(z)=Z_\epsilon^{-1}\exp\bigl(-\frac{\sin^2(z/2)}{\epsilon^2/2}\bigr)
\]
is the so-called von-Mises kernel.

Owing to Assumption~\ref{ass:kernel}, it is straightforward to check that $F^\epsilon[\mu]$ is of class $\mathcal{C}^\infty$, for any $\mu\in\mathcal{P}(\T^d)$. Then the mapping ${A}^\epsilon[\mu]$ is the solution of the elliptic linear partial differential equation
\[
\Delta {A}^\epsilon[\mu]={\rm div}(F^\epsilon[\mu])
\]
and standard elliptic regularity theory implies that ${A}^\epsilon[\mu]$ is also of class $\mathcal{C}^\infty$. See Lemma~\ref{lem:bounds-F_A} below for quantitative bounds (depending on $\epsilon$).

\begin{propo}\label{propo:wellposed}
Under Assumption \ref{ass:kernel}, for any initial conditions $x_0=(y_0,z_0)\in\T^d$, the system~\eqref{eq:ABF} admits a unique solution, which is defined for all times $t\ge 0$.
\end{propo}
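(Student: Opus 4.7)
The state space $\T^d$ is compact and the only nonstandard feature of \eqref{eq:ABF} is the self-interaction: the drift in the $Z$-equation involves $\nabla A^\epsilon[\mu_t]$, which depends on the entire past via $\mu_t$. My plan is to set up a Picard-type fixed-point argument on path space over a small time interval $[0,\tau]$, then iterate to extend to all $t\ge 0$ using compactness of $\T^d$. The two ingredients needed are (i) uniform-in-$\mu$ bounds and (ii) Lipschitz continuity of the map $\mu\mapsto \nabla A^\epsilon[\mu]$ on $\mathcal{P}(\T^d)$ (for fixed $\epsilon$), both with respect to a convenient metric on measures.

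The first step is to establish the regularity of $\mu\mapsto\nabla A^\epsilon[\mu]$. Because $K_\epsilon$ is smooth and bounded below by $m_\epsilon>0$, the denominator $\iint K_\epsilon(z,\cdot)\,d\mu$ is uniformly bounded below by $m_\epsilon$ for every $\mu\in\mathcal{P}(\T^d)$. Combined with the smoothness of $V$ and the bounds $M_\epsilon^{(k)}<\infty$, this gives uniform $\mathcal{C}^k$ bounds on $F^\epsilon[\mu]$, depending on $\epsilon$ but not on $\mu$. Elliptic regularity for the Poisson equation $\Delta A^\epsilon[\mu]=\mathrm{div}\,F^\epsilon[\mu]$ on $\T^m$ then propagates these into uniform $\mathcal{C}^k$ bounds on $\nabla A^\epsilon[\mu]$ (this is essentially the content of the announced Lemma~\ref{lem:bounds-F_A}). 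The same quotient structure, together with smoothness of $K_\epsilon$ and $\nabla_z V$, yields a Lipschitz estimate
\[
\|\nabla A^\epsilon[\mu]-\nabla A^\epsilon[\nu]\|_\infty \;\le\; L_\epsilon\, W_1(\mu,\nu)
\]
for all $\mu,\nu\in\mathcal{P}(\T^d)$, obtained by writing both sides of the quotient as integrals against smooth, bounded, bounded-below test functions and using that $W_1(\mu,\nu)$ dominates differences of integrals of $\mathrm{Lip}$ functions.

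For the fixed-point construction, fix $\tau>0$ and, for a candidate continuous path $x=(x_s)_{s\in[0,\tau]}$ with $x_0=(y_0,z_0)$, set $\mu_s^x=\frac{1}{s}\int_0^s \delta_{x_r}\,dr$ (with $\mu_0^x=\delta_{x_0}$) and the drift $b_s^x(z)=\nabla A^\epsilon[\mu_s^x](z)$. By the uniform bounds above, $b^x$ is bounded and smooth in $z$ uniformly in $s$, so standard SDE theory produces a unique strong solution $\Phi(x)$ of \eqref{eq:ABF-intro} with this drift. The key contraction estimate uses that
\[
W_1(\mu_s^x,\mu_s^{\tilde x})\;\le\;\frac{1}{s}\int_0^s|x_r-\tilde x_r|\,dr\;\le\;\sup_{r\le s}|x_r-\tilde x_r|,
\]
which absorbs the apparent $1/s$ singularity at $s=0$. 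Combining this with the Lipschitz bound on $\mu\mapsto\nabla A^\epsilon[\mu]$ and a Gr\"onwall argument on $\Phi(x)-\Phi(\tilde x)$ yields
\[
\E\sup_{s\le \tau}|\Phi(x)_s-\Phi(\tilde x)_s|^2 \;\le\; C_\epsilon\,\tau\,\E\sup_{s\le\tau}|x_s-\tilde x_s|^2,
\]
which is a contraction for $\tau$ small enough depending on $\epsilon$. This produces a unique local solution on $[0,\tau]$.

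Extension to all $t\ge 0$ is then immediate: $\T^d$ is compact so $(Y_t,Z_t)$ cannot explode, and the drift bound on $\nabla A^\epsilon[\mu_t]$ is uniform in $t$ (depending only on $\epsilon$), so the same $\tau$ works on every subinterval $[k\tau,(k+1)\tau]$. Uniqueness on $[0,\infty)$ follows by rerunning the contraction argument on the difference of two solutions. The main technical obstacle is handling the $1/t$ factor in $\mu_t$ near $t=0$, but as observed above this singularity is only apparent: for continuous paths starting at $x_0$ the map $s\mapsto\mu_s^x$ is Lipschitz at $0$ in $W_1$, and the contraction estimate is set up to use this cancellation directly.
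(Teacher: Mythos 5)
Your proposal is correct and establishes the result, though it takes a somewhat different organizational route from the paper. The preliminary ingredients are the same: uniform (in $\mu$, for fixed $\epsilon$) $\mathcal{C}^k$ bounds on $F^\epsilon[\mu]$ and $A^\epsilon[\mu]$ coming from $K_\epsilon\ge m_\epsilon>0$ plus elliptic regularity, and a Lipschitz estimate of $\mu\mapsto A^\epsilon[\mu]$ in $W_1$; these are exactly the paper's Lemmas~\ref{lem:bounds-F_A} and~\ref{lem:Lip-F_A}. Where you diverge is in the fixed-point construction. The paper defines $\Phi(x)$ by plugging the input path $x$ into \emph{both} the drift argument and the empirical measure, so each iterate is an explicit integral (no SDE solve); it then contracts on the exponentially weighted space $\vvvert X\vvvert_\alpha=\bigl(\E\sup_{t\le T}e^{-2\alpha t}|X_t|^2\bigr)^{1/2}$ with $\alpha$ large, which gives the fixed point on an arbitrary $[0,T]$ in one shot. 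You instead freeze only the measure $\mu^x$ from the input and solve the resulting Markovian SDE for the position, then contract on a short interval $[0,\tau]$ and iterate; this is an equally valid Picard scheme, at the modest price of invoking SDE well-posedness for bounded Lipschitz drift at each step. A genuine improvement of your version is the handling of the $1/t$ factor: you keep $\mu^x_s=\frac{1}{s}\int_0^s\delta_{x_r}dr$ and observe the cancellation $W_1(\mu^x_s,\mu^{\tilde x}_s)\le\sup_{r\le s}|x_r-\tilde x_r|$, so the fixed point solves~\eqref{eq:ABF} exactly as written; the paper instead regularizes to $\mu^x_t=\frac{1}{1+t}(\mu_0+\int_0^t\delta_{x_s}ds)$ to sidestep the singularity, which is harmless but slightly changes the normalization. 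One small point to make fully rigorous in your argument is the measurability/continuity of $s\mapsto\nabla A^\epsilon[\mu^x_s]$ needed for the auxiliary SDE to be well posed, but this follows immediately from the continuity of $s\mapsto\mu^x_s$ in $W_1$ together with Lemma~\ref{lem:Lip-F_A}.
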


The proof of Proposition~\ref{propo:wellposed} is postponed to Section~\ref{sec:proof-wellposed}

\subsection{Main result and discussion}\label{sec:main+discussion}

Remark that the free energy can be defined up to an additive constant. Above,  $A_\star$ has been normalized so that $\int_{\T^m} e^{-A_\star} d z = 1$, while $A_t$ is such that $\int_{\T^m} A_t dz=0$. Denote $\bar A_\star = A_\star - \int_{\T^m} A_\star(z) dz$. The standard norm on the Sobolev space $W^{1,p}(\T^m)$, for $p\in[2,\infty)$, is denoted by $\|\cdot\|_{W^{1,p}}$.

\begin{theo}\label{theo:Main}
Under Assumption \ref{ass:kernel}, there exists $\epsilon_0>0$ and, for all $p\in[1,+\infty)$, there exists $C_p\in[0,+\infty)$  such that, for all $\epsilon\in(0,\epsilon_0]$, there exists a unique probability distribution $\mu_\infty^\epsilon \in\mathcal P(\T^d)$ which satisfies
\[
d\mu_\infty^{\epsilon}(x)=d\mu_{\star}^{A^\epsilon[\mu_\infty^\epsilon]}(x)\propto e^{A^\epsilon[\mu_\infty^\epsilon](z)}d\mu_\star(y,z).
\]
In addition, one has the error estimate
\begin{eqnarray*}
\|\bar A_\star - A^\epsilon[\mu_\infty^\epsilon]\|_{W^{1,p}} & \leqslant & C_p\sqrt{\epsilon}\,,
\end{eqnarray*}
and, for any initial conditions $x_0=(y_0,z_0)\in\T^d$, almost surely, one has the convergence
\begin{eqnarray*}
\|A_t - A^\epsilon[\mu_\infty^\epsilon]\|_{W^{1,p}} &  \underset{t\rightarrow \infty}\longrightarrow & 0\\
\mu_t & \underset{t\rightarrow \infty}\longrightarrow & \mu_\infty^\epsilon\,,
\end{eqnarray*}
the latter in the sense of weak convergence in the set $\mathcal{P}(\T^d)$.
\end{theo}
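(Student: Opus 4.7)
The plan is to follow the ODE method for self-interacting diffusions, as developed in \cite{BLR02,BR03,BR05,BR11}, adapted to the present setting where the feedback goes through the nonlinear map $\mu\mapsto A^\epsilon[\mu]$. First, I would differentiate the empirical measure $\mu_t$ to obtain the informal evolution
\begin{equation*}
\frac{d\mu_t}{dt}=\frac{1}{t}\bigl(\delta_{X_t}-\mu_t\bigr),
\end{equation*}
and, after the logarithmic time change $\tau=\log t$, interpret it as a perturbation of the autonomous semiflow $\Phi$ on $\mathcal{P}(\T^d)$ driven by
\begin{equation*}
\frac{dm_\tau}{d\tau}=\Pi^\epsilon(m_\tau)-m_\tau,\qquad \Pi^\epsilon(\mu):=\mu_\star^{A^\epsilon[\mu]},
\end{equation*}
where $\mu_\star^{A^\epsilon[\mu]}$ denotes the invariant distribution of the overdamped Langevin dynamics biased by $A^\epsilon[\mu]$ with $\mu$ frozen. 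Using the $\epsilon$-uniform bounds on $A^\epsilon[\mu]$ announced in Proposition~\ref{propo:bound}, together with standard exponential ergodicity for the frozen biased process (uniform in $A$ over any $\mathcal{C}^k$-bounded set), I would establish the asymptotic pseudotrajectory (APT) property for $(\mu_{e^\tau})_\tau$ relative to $\Phi$; this is announced as the main output of Section~\ref{sec:ODE}.

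Once APT is in hand, the almost sure asymptotic behaviour of $\mu_t$ is dictated by the limit set of $\Phi$. Its equilibria are exactly the measures $\mu$ with $\mu=\Pi^\epsilon(\mu)$, i.e., solutions of the self-consistency equation stated in the theorem. To identify them and establish global attraction, I would construct a strict Lyapunov function of relative-entropy type for $\Phi$: a natural candidate is
\begin{equation*}
\mathcal{L}^\epsilon(\mu)=\mathrm{Ent}\bigl(\mu\,\big|\,\mu_\star^{A^\epsilon[\mu]}\bigr)+\Psi^\epsilon\bigl(A^\epsilon[\mu]\bigr),
\end{equation*}
where $\Psi^\epsilon$ is chosen so that the differential of $\mathcal{L}^\epsilon$ along $\Phi$ produces, after using the explicit formula $\Delta A^\epsilon[\mu]=\mathrm{div}\,F^\epsilon[\mu]$, a nonpositive dissipation vanishing only at fixed points of $\Pi^\epsilon$. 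Uniqueness of the equilibrium $\mu_\infty^\epsilon$ for $\epsilon\leqslant \epsilon_0$ would then follow from a perturbative/contraction argument around the ideal measure $\mu_\star^{\bar A_\star}$, combined with the continuity of $\mu\mapsto A^\epsilon[\mu]$ uniform in $\epsilon$. The conclusions $\mu_t\to\mu_\infty^\epsilon$ weakly and $A_t=A^\epsilon[\mu_t]\to A^\epsilon[\mu_\infty^\epsilon]$ in $W^{1,p}$ then follow from the general APT-plus-Lyapunov principle, applied together with the continuity (in fact compactness in $W^{1,p}$) of $\mu\mapsto A^\epsilon[\mu]$ established through the elliptic regularity theory used for Proposition~\ref{propo:wellposed}.

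For the error estimate $\|\bar A_\star - A^\epsilon[\mu_\infty^\epsilon]\|_{W^{1,p}}\leqslant C_p\sqrt{\epsilon}$, the starting point is that if $\mu=\mu_\star^{\bar A_\star}$, then the $\epsilon\to 0$ formal limit of $F^\epsilon[\mu]$ equals $\nabla A_\star$, by Assumption~\ref{ass:kernel}. I would quantify the deviation $F^\epsilon[\mu_\infty^\epsilon]-\nabla A_\star$ in $L^p$ by Taylor-expanding $\nabla_z V$ around the target point and controlling the first-order remainder via Cauchy--Schwarz against the moment bound $\int|z-z'|^2 K_\epsilon(z,z')dz'\leqslant c_K\epsilon$, which yields an $L^p$ bound of order $\sqrt\epsilon$ (plus a lower-order term coming from the perturbation of $\mu_\infty^\epsilon$ away from $\mu_\star^{\bar A_\star}$, absorbed by the contraction already needed for uniqueness). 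Applying elliptic $W^{1,p}$ regularity to $\Delta A^\epsilon[\mu_\infty^\epsilon]=\mathrm{div}\,F^\epsilon[\mu_\infty^\epsilon]$ and $\Delta \bar A_\star=\mathrm{div}(\nabla A_\star)$ then transfers this $\sqrt\epsilon$ bound to the $W^{1,p}$ norm of the difference.

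I expect the main obstacle to be the proof of global asymptotic stability, rather than APT or the $\sqrt\epsilon$ comparison. Two features make it nontrivial: the projection $\mathbf{P}$ destroys any direct gradient structure for $F^\epsilon[\mu]$, so one cannot simply read off a free-energy functional whose critical points are the equilibria; and the map $\mu\mapsto A^\epsilon[\mu]$ is nonlocal in $\mu$, so even the existence of a single fixed point requires a Schauder/contraction argument with $\epsilon$-uniform estimates, and its uniqueness for small $\epsilon$ genuinely relies on perturbation around the ideal $\bar A_\star$. Establishing monotonicity of $\mathcal{L}^\epsilon$ along $\Phi$ and coercivity of the dissipation with constants controlled uniformly in $\epsilon\in(0,\epsilon_0]$ will be the technical core of Section~\ref{sec:proof-main}.
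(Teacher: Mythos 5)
Your plan for the asymptotic pseudotrajectory step (logarithmic time change, limiting flow $\Phi^\epsilon$ with right-hand side $\Pi^\epsilon[\mu]-\mu$, $\epsilon$-uniform bounds from Proposition~\ref{propo:bound} feeding into ergodicity estimates for the frozen process) matches Section~\ref{sec:ODE} exactly. Your treatment of the $\sqrt\epsilon$ error is also in the right spirit: it does reduce to the second-moment bound $\int|z-z'|^2K_\epsilon\,dz'\leqslant c_K\epsilon$ via Cauchy--Schwarz and elliptic $W^{1,p}$ regularity, though the paper first passes through the key algebraic identity of Lemma~\ref{lem:identityF} — for measures of the form $\mu_B$, $F^\epsilon[\mu_B]$ is exactly a $K_\epsilon$-weighted average of $\nabla A_\star$ — so that only Lipschitz continuity of $\nabla A_\star$ is needed rather than a Taylor expansion of $\nabla_zV$ in the full phase space.

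The genuine gap is in the global asymptotic stability argument. You propose a relative-entropy Lyapunov functional $\mathcal{L}^\epsilon(\mu)=\mathrm{Ent}\bigl(\mu\,\big|\,\mu_\star^{A^\epsilon[\mu]}\bigr)+\Psi^\epsilon(A^\epsilon[\mu])$ with an unspecified corrector $\Psi^\epsilon$, and you flag yourself that the projection $\mathbf{P}$ destroys the gradient structure that would make such a functional dissipative. That concern is well-founded, and the construction as stated is unlikely to close: because $A^\epsilon[\mu]$ depends nonlocally and nonlinearly on $\mu$, the time derivative of $\mathrm{Ent}(\mu_\tau\,|\,\mu_\star^{A^\epsilon[\mu_\tau]})$ along the flow picks up terms involving $\partial_\tau A^\epsilon[\mu_\tau]$ that have no definite sign, and there is no obvious $\Psi^\epsilon$ that cancels them uniformly in $\epsilon$. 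The paper avoids the Lyapunov route entirely. It exploits two simpler structural facts: first (Proposition~\ref{propo:attractor}), the integral form $\Phi^\epsilon(t,\mu)=e^{-t}\mu+(1-e^{-t})\Psi^\epsilon(t,\mu)$ shows that the flow contracts at rate $e^{-t}$, uniformly in $\epsilon$ and $\mu$, onto the set $\mathcal{M}=\{\mu_B\}$ of Gibbs-type measures; second (Proposition~\ref{propo:contractionPitilde}), on that attracting set the map $h_B\mapsto\tilde{\Pi}^\epsilon[h_B]$ is an $L^2$-contraction with constant $C_M\sqrt\epsilon<1$ once $\epsilon$ is small enough — and this contraction constant degenerates favorably as $\epsilon\to 0$, precisely because of the kernel moment bound. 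Existence and uniqueness of $\mu_\infty^\epsilon$ are then a Banach fixed point argument, and exponential convergence $d_{\rm TV}(\Phi^\epsilon(t,\mu_B),\mu_\infty^\epsilon)\leqslant Ce^{-(1-C_{M^{(1)}}\sqrt\epsilon)t}$ follows from Gr\"onwall, not from a Lyapunov function. The APT property plus these two estimates yield $\mu_t\to\mu_\infty^\epsilon$ by the standard limit-set argument. You do mention a contraction argument, but you confine it to uniqueness and leave convergence to the unconstructed Lyapunov functional; the paper shows the contraction is already the whole mechanism for convergence once one recognizes that $\mathcal{M}$ is exponentially attracting.
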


The first identity in Theorem~\ref{theo:Main} means that the limit $\mu_\infty^\epsilon$ of $\mu_t$ is the fixed point of the mapping $\mu\mapsto \mu_\star^{A^\epsilon[\mu]}$, see Section~\ref{sec:ODE}. Equivalently, the limit $A^\epsilon[\mu_\infty^\epsilon]$ of $A_t$ is the fixed point of the mapping $A\mapsto A^\epsilon[\mu_\star^A]$, where we recall that $d\mu_\star^A(x)=e^{A(z)}d\mu_\star(y,z)$.

The almost sure convergence results of Theorem~\ref{theo:Main} may be loosely rephrased as follows
\[
\underset{\epsilon\to 0}\lim~\underset{t\to\infty}\lim~A_t=A_\star~,\quad \underset{\epsilon\to 0}\lim~\underset{t\to\infty}\lim~\mu_t=\mu_\star^{A_\star},
\]
and implies that the empirical distribution $\nu_t=\frac{1}{t}\int_{0}^{t}\delta_{\xi(X_s)}ds$ satisfies the approximate asymptotic flat-histogram property
\[
\underset{\epsilon\to 0}\lim~\underset{t\to\infty}\lim~\nu_t=dz.
\]
We stress that $\mu_\infty^\epsilon$ is not close (when $\epsilon\to 0$) to the multimodal target distribution $\mu_\star$: with the notation above one has $\mu_\star=\mu_\star^0\neq \mu_\star^{A_\star}$. However, the algorithm gives a way to approximate $\int\varphi d\mu_\star$ by reweighting: using the Cesaro Lemma, it is straightforward to check that one has
\[
\underset{t\to\infty}\lim~\frac{\int_0^t\varphi(X_s)e^{-A_s(Z_s)}ds}{\int_0^te^{-A_s(Z_s)}ds}=\underset{t\to\infty}\lim~\frac{\int_0^t\varphi(X_s)e^{-A^\epsilon[\mu_\infty^\epsilon](Z_s)}ds}{\int_0^te^{-A^\epsilon[\mu_\infty^\epsilon](Z_s)}ds}=\frac{\int \varphi(y,z) e^{-A^\epsilon[\mu_\infty^\epsilon](z)}d\mu_\infty^\epsilon(y,z)}{\int e^{-A^\epsilon[\mu_\infty^\epsilon](z)}d\mu_\infty^\epsilon(y,z)}=\int \varphi d\mu_\star,
\]
for any smooth $\varphi:\T^d\to\R$. Indeed, by the Sobolev embedding $W^{1,p}(\T^m)\subset \mathcal{C}^0(\T^m)$ if $p>m$, $A_t$ converges to $A^\epsilon[\mu_\infty^\epsilon]$ uniformly on $\T^m$.

Up to an error depending only on the width $\epsilon>0$ of the kernel function $K_\epsilon$, the adaptive algorithm~\eqref{eq:ABF} is thus a consistent way to approximately compute $\int\varphi d\mu_\star$, as well as the free energy function $A_\star$. The approximate asymptotic flat-histogram property stated above shows that the sampling in the slow, macroscopic variable $z$ is enhanced, hence the efficiency of the approach. Such results are a mathematical justification for the use of the ABF method based on self-interating dynamics in practical computations.

\begin{rem}
From Theorem \ref{theo:Main}, we expect the following Central Limit Theorem to hold: for all bounded $\varphi$ on $\T^d$,
\[\sqrt t \left( \int \varphi d\mu_t - \int \varphi d\mu_\infty^\epsilon\right) \ \overset{law}{\underset{t\rightarrow\infty}\longrightarrow} \ \mathcal N(0,\sigma_\varphi)\]
where $\sigma_\varphi$ is the asymptotical variance obtained by considering the process with a constant bias $\nabla A^\epsilon[\mu_\infty^\epsilon]$. Nevertheless, the proof of such a result, extending \cite[Theorem 4.III.5]{Duflo96}  at the cost of technical considerations, exceeds the scope of the present article.
\end{rem}

\begin{rem}
The convergence of $A_t$ to $A^\epsilon[\mu_\infty^\epsilon]$ when $t\to \infty$ in fact holds for $\mathcal{C}^k$ norms, for all integers $k$. However, the convergence of $\bar A_\star - A^\epsilon[\mu_\infty^\epsilon]$ when $\epsilon\to 0$ can be obtained only in $W^{1,p}$, for all $p\in[2,\infty)$ (hence in $\mathcal{C}^0$ due to a Sobolev embedding, for $p>m$). In fact, higher-order derivatives of $F^\epsilon[\mu]$ (and of $A^\epsilon[\mu]$) are expected to explode when $\epsilon\to 0$.
\end{rem}

The ABF has originally been introduced in \cite{ABF_1} in the molecular dynamics community, where it is widely used, see \cite{ABF_2,D17,ABF14}. An example of application in statistics is developed in \cite{CLS:12}. Another popular related biasing algorithm is the metadynamics algorithm \cite{M},\cite{WTM},\cite{JLZ:19},\cite{BCG:15}.

From a theoretical point of view, several variants of the ABF algorithm have been considered in various works. In a series of papers \cite{LRS08,AlrachidLelievre:15,LRS07,LM11}, Leli\`evre and his co-authors  considered a process similar to \eqref{eq:ABF} except that $\mu_t$ is replaced by the law of $X_t$. This corresponds to the mean-field limit of a system of $N$ interacting particles as $N$ goes to infinity \cite{JLR10}. The law of $X_t$ then solves a non-linear PDE, and long-time convergence is established through entropy techniques. In practice in fact, the bias $A_t$ is obtained both from interacting particles and from interaction with the past trajectories, so that $\mu_t$ is the empirical distribution of a system of $N$ replicas of the system $(X_t,Y_t)$ that contributes all to the same bias $A_t$.

The case of adaptive bias algorithm with a self-interacting process is addressed in \cite{EhrlacherLelievreMonmarche2019} for the ABF algorithm and in \cite{BenaimBrehier:16,BenaimBrehier:19} for the related adaptive biasing potential (ABP) algorithm. We emphasize on the fact that in these works, $\mu_t$ is replaced by a weighted empirical measure $\bar \mu_t$ given, in the spirit of an importance sampling scheme, by 
\[\bar \mu_t \ = \ \left(\int_0^t e^{-A_s(Z_s)} ds \right)^{-1}\int_0^t \delta_{X_s}e^{-A_s(Z_s)} ds\,.\]
Contrary to $\mu_t$ in Theorem~\ref{theo:Main}, this weighted empirical measure converges toward $\mu_\star$. This makes the theoretical study simpler than in the present case. However, in practice, there should be no reason to use this weighting procedure for ABF due to the identity~\eqref{eq:nablaAstar}. Indeed, provided that $A_t$ converges to some $A_\infty$, in the idealized case where $K_\epsilon$ is a Dirac mass, then  \eqref{eq:nablaAstar} implies that necessarily $A_\infty = A_\star$. This is no more true as soon as $\epsilon>0$ (which is necessary for the well-posedness of the algorithm), and one of the main motivation of the present work was to determine whether the convergence of  the natural (non re-weighted) version of ABF, which is the one used in practice, was robust with respect to the regularization step. Our results shows that this is true, provided $\epsilon$ is small enough.

\subsection{Notation}

Let $\N=\left\{1,\ldots\right\}$ and $\N_0=\N\cup\{0\}$, and let $k\in\N_0$ be a nonnegative integer. Let $\mathcal{C}^k(\T^{n_1},\R^{n_2})$ be the space of functions of class $\mathcal{C}^k$ on $\T^{n_1}$ with values on $\R^{n_2}$. The derivative of order $k$ is denoted by $\nabla^k$. The space $\mathcal{C}^k(\T^{n_1},\R^{n_2})$ is equipped with the norm $\|\cdot\|_{\mathcal{C}^k}$, defined by
\[
\|\phi\|_{\mathcal{C}^k}=\sum_{\ell=0}^{k}\|\nabla^k \phi\|_{\mathcal{C}^0},
\]
with $\|\phi\|_{\mathcal{C}^0}=\underset{z\in \T^{n_1}}\max~\|\phi(x)\|$. To simplify, the dimensions $n_1$ and $n_2$ are omitted in the notation for the norm $\|\cdot\|_{\mathcal{C}^k}$.

If $\phi:\T^{n_1}\to\R^{n_2}$ is a Lipschitz continuous function, its Lipschitz constant is denoted by ${\rm Lip}(\phi)$.

The space $\mathcal{P}(\T^d)$ of probability distributions on $\T^d$ (equipped with the Borel $\sigma$-field) is equipped with the total variation distance $d_{\rm TV}$ and with the Wasserstein distance $d_{\mathcal{W}_1}$. Recall that one has the following characterizations:
\[
\begin{aligned}
d_{TV}(\mu_1,\mu_2)=\underset{\psi:\T^d\to\R, \|\psi\|_\infty\le 1}\sup~\frac12\big|\int \psi d\mu_2-\int \psi d\mu_1\big|,\\
d_{\mathcal{W}_1}(\mu_1,\mu_2)=\underset{\psi:\T^d\to\R, {\rm Lip}(\psi)\le 1}\sup~\big|\int \psi d\mu_2-\int \psi d\mu_1\big|
\end{aligned}
\]
where for the total variation distance the supremum is taken over bounded measurable functions $\psi$.

The space $\mathcal{P}(\T^d)$ is also equipped with the following distance, which generates the topology of weak convergence:
\[
d_w(\mu_1,\mu_2)=\sum_{n\in\N}\frac{1}{2^n}\frac{\big|\int f_nd\mu_2-\int f_n d\mu_1\big|}{1+\big|\int f_nd\mu_2-\int f_n d\mu_1\big|},
\]
where the sequence $\mathcal{S}=\{f_n\}_{n\in\N}$ is dense in $\mathcal{C}^0(\T^d,\R)$, and, for all $n\in\N$, one has $f_n\in\mathcal{C}^\infty$ and $\|f_n\|_{\mathcal{C}^0}\le 1$.

\section{Proof of the well-posedness result Proposition~\ref{propo:wellposed}}\label{sec:proof-wellposed}

The objective of this section is to prove Proposition~\ref{propo:wellposed}, which states that the system~\eqref{eq:ABF} is well-posed. Some auxiliary estimates are provided, where the upper bounds are allowed to depend on the parameter $\epsilon$. Lemma~\ref{lem:bounds-F_A} provides estimates for $F^\epsilon[\mu]$ and $A^\epsilon[\mu]$, in $\mathcal{C}^k$, uniformly with respect to $\mu$. Lemma~\ref{lem:Lip-F_A} provides some Lipschitz continuity estimates with respect to $\mu$, in total variation and Wasserstein distances.

\subsection{Auxiliary estimates}

\begin{lemma}\label{lem:bounds-F_A}
For all $\epsilon\in (0,1]$ and $k\in\N_0$, there exists $C_{\epsilon,k}\in(0,\infty)$ such that one has
\[
\underset{\mu\in\mathcal{P}(\T^d)}\sup~\Bigl(\|F^\epsilon[\mu]\|_{\mathcal{C}^k(\T^m,\R^m)}+\|A^\epsilon[\mu]\|_{\mathcal{C}^k(\T^m,\R)}\Bigr)\le C_{\epsilon,k}.
\]
\end{lemma}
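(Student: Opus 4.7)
The plan is to first establish the $\mathcal{C}^k$ bound for $F^\epsilon[\mu]$ by viewing it as a smooth quotient whose numerator and denominator both admit uniform (in $\mu$) bounds coming from the smoothness and strict positivity of the kernel $K_\epsilon$, and then transfer this bound to $A^\epsilon[\mu]$ via elliptic regularity applied to the Poisson-type equation $\Delta A^\epsilon[\mu]={\rm div}(F^\epsilon[\mu])$ introduced right after~\eqref{eq:def-Aepsilon}.

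For the bound on $F^\epsilon[\mu]$, write $F^\epsilon[\mu](\cdot)=N^\epsilon[\mu](\cdot)/D^\epsilon[\mu](\cdot)$ with
$$N^\epsilon[\mu](z')=\iint \nabla_z V(y,z)\,K_\epsilon(z,z')\,d\mu(y,z),\qquad D^\epsilon[\mu](z')=\iint K_\epsilon(z,z')\,d\mu(y,z).$$
Since $\mu$ is a probability measure and $K_\epsilon(z,z')\ge m_\epsilon>0$ by Assumption~\ref{ass:kernel}, one has $D^\epsilon[\mu](z')\ge m_\epsilon$ uniformly in $\mu\in\mathcal{P}(\T^d)$ and $z'\in\T^m$. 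The smoothness and the bounds $|\nabla_{z'}^j K_\epsilon(z,z')|\le M_\epsilon^{(j)}$ justify differentiation under the integral sign and yield, for every $j\in\{0,\dots,k\}$,
$$\|\nabla^j N^\epsilon[\mu]\|_{\mathcal{C}^0}\le \|\nabla_z V\|_{\mathcal{C}^0}\,M_\epsilon^{(j)},\qquad \|\nabla^j D^\epsilon[\mu]\|_{\mathcal{C}^0}\le M_\epsilon^{(j)},$$
uniformly in $\mu$. Applying the Fa\`a di Bruno / iterated quotient rule with the lower bound on $D^\epsilon[\mu]$ then produces a constant $C_{\epsilon,k}$, polynomial in the quantities $M_\epsilon^{(0)},\dots,M_\epsilon^{(k)}$, $1/m_\epsilon$ and $\|V\|_{\mathcal{C}^1}$, such that $\|F^\epsilon[\mu]\|_{\mathcal{C}^k}\le C_{\epsilon,k}$ for all $\mu\in\mathcal{P}(\T^d)$. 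In practice I would carry this step out by induction on $k$ rather than writing the closed form.

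For the bound on $A^\epsilon[\mu]$, existence and uniqueness of the minimiser in~\eqref{eq:def-Aepsilon}, together with the Euler--Lagrange equation $\Delta A^\epsilon[\mu]={\rm div}(F^\epsilon[\mu])$ with the normalisation $\int A^\epsilon[\mu]\,dz=0$, follow from the Lax--Milgram theorem applied on the Hilbert space $\{A\in H^1(\T^m):\int A\,dz=0\}$ equipped with $\|\nabla A\|_{L^2}$ (Poincar\'e--Wirtinger on $\T^m$). Since $F^\epsilon[\mu]$ is $\mathcal{C}^\infty$, standard elliptic regularity on the torus (equivalently, Fourier multiplier estimates for $(-\Delta)^{-1}{\rm div}$ on the zero-mean subspace) gives $A^\epsilon[\mu]\in\mathcal{C}^\infty$ with the quantitative estimate $\|A^\epsilon[\mu]\|_{\mathcal{C}^{k+1}}\le \widetilde{C}_k\,\|F^\epsilon[\mu]\|_{\mathcal{C}^k}$; combining with the previous step yields the desired uniform bound on $\|A^\epsilon[\mu]\|_{\mathcal{C}^k}$ and completes the proof (after updating $C_{\epsilon,k}$).

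No step is particularly delicate: the statement is essentially an exercise in book-keeping the $\epsilon$-dependence of the constants. The only mild care needed is to justify differentiation under the integral and to organise the quotient-rule expansion; the former is immediate from Assumption~\ref{ass:kernel} and the latter is handled by an induction on $k$.
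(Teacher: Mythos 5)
Your proof is correct and follows essentially the same approach as the paper: a uniform lower bound on the denominator together with $\mathcal{C}^k$ bounds on numerator and denominator (by differentiating under the integral against $K_\epsilon$) gives the bound on $F^\epsilon[\mu]$, and the bound on $A^\epsilon[\mu]$ is then transferred via the Euler--Lagrange equation $\Delta A^\epsilon[\mu]=\mathrm{div}(F^\epsilon[\mu])$ and elliptic regularity with Sobolev embedding. The paper merely packages the numerator/denominator uniformly through an auxiliary functional $F_{\rm aux}^\epsilon[\mu,\psi]$, which is the same decomposition as yours up to notation.
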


\begin{proof}[Proof of Lemma~\ref{lem:bounds-F_A}]
Observe that
\[
F^\epsilon[\mu]=\frac{F_{\rm aux}[\mu,\nabla_zV]}{F_{\rm aux}[\mu,1]},
\]
where $F_{\rm aux}^\epsilon[\mu,\psi]=\iint \psi(y,z)K_\epsilon(z,\cdot)d\mu(y,z)$.

Owing to Assumption~\ref{ass:kernel}, one has
\[
F_{\rm aux}^\epsilon[\mu,1]\ge m_\epsilon \int d\mu=m_\epsilon>0,
\]
for all $\mu\in\mathcal{P}(\T^d)$. In addition, for all $k\in\N_0$, one has
\[
\nabla^k F_{\rm aux}^\epsilon[\mu,\psi]=\iint \psi(y,z)\nabla^kK_\epsilon(z,\cdot) d\mu(y,z),
\]
thus, one obtains
\[
\|F_{\rm aux}^{\epsilon}[\mu,\psi]\|_{\mathcal{C}^k}\le \|\psi\|_{\mathcal{C}^0}M_\epsilon^{(k)}<\infty,
\]
owing to Assumption~\ref{ass:kernel}.

Using the estimate above with $\psi=\nabla_zV$ and $\psi=1$, it is then straightforward to deduce that
\[
\|F^\epsilon[\mu]\|_{\mathcal{C}^k}=\|\frac{F_{\rm aux}[\mu,\nabla_zV]}{F_{\rm aux}[\mu,1]}\|_{\mathcal{C}^k}\le C_{\epsilon,k}.
\]
This concludes the proof of the estimates for $F^\epsilon[\mu]$. To prove the estimates for $A^\epsilon[\mu]$, observe that $\tilde{A}^\epsilon[\mu]$ solves the Euler-Lagrange equation associated with the minimization problem in~\eqref{eq:def-Aepsilon},
\[
\Delta \tilde{A}^\epsilon[\mu]={\rm div}\bigl(F^\epsilon[\mu]\bigr).
\]
Using the result proved above, and standard elliptic regularity theory and Sobolev embeddings, one obtains the required estimates for $\tilde{A}^\epsilon[\mu]$: for all $\epsilon\in(0,1]$ and $k\in\N_0$, there exists $C_{\epsilon,k}\in(0,\infty)$ such that for all $\mu\in\mathcal{P}(\T^d)$,
\[
\|\tilde{A}^\epsilon[\mu]\|_{\mathcal{C}^k(\T^m,\T)}\le C_{\epsilon,k}.
\]
Since $A^\epsilon[\mu]$ and $\tilde{A}^\epsilon[\mu]$ only differ by an additive constant, it only remains to prove that
\[
\|A^\epsilon[\mu]\|_{\mathcal{C}^0(\T^m,\T)}\le C_{\epsilon,0}.
\]
This is a straightforward consequence of the estimate $\|\tilde{A}^\epsilon[\mu]\|_{\mathcal{C}^0(\T^m,\T)}\le C_{\epsilon,0}$ and of~\eqref{eq:def-Aepsilon}.

This concludes the proof of Lemma~\ref{lem:bounds-F_A}.
\end{proof}

\begin{lemma}\label{lem:Lip-F_A}
For all $\epsilon\in (0,1]$ and $k\in\N_0$, there exists $L_{\epsilon,k}\in(0,\infty)$ such that, for all $\mu_1,\mu_2\in\mathcal{P}(\T^d)$, one has
\[
\|F^\epsilon[\mu_2]-F^\epsilon[\mu_1]\|_{\mathcal{C}^k(\T^m,\R^m)}+\|A^\epsilon[\mu_2]-A^\epsilon[\mu_1]\|_{\mathcal{C}^k(\T^m,\R)}\le L_{\epsilon,k}\bigl(d_{\rm TV}(\mu_1,\mu_2)\wedge d_{\mathcal{W}_1}(\mu_1,\mu_2)\bigr).
\]
\end{lemma}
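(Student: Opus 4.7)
The plan is to write $F^\epsilon[\mu]$ as a ratio $N^\epsilon[\mu]/D^\epsilon[\mu]$ with
\[
N^\epsilon[\mu](z)=\iint \nabla_{z'}V(y,z')K_\epsilon(z',z)\,d\mu(y,z'),\qquad D^\epsilon[\mu](z)=\iint K_\epsilon(z',z)\,d\mu(y,z'),
\]
and then exploit the fact that for fixed $z$ and any multi-index in $z$, the integrands $\nabla_z^\ell\bigl(\nabla_{z'}V(y,z')K_\epsilon(z',z)\bigr)$ and $\nabla_z^\ell K_\epsilon(z',z)$ are both \emph{bounded} and \emph{Lipschitz continuous} in $(y,z')\in\T^d$, with constants depending on $\epsilon$, $k$ and $V$ only (through $M_\epsilon^{(\ell)}$ and $\|V\|_{\mathcal{C}^{k+1}}$). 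First I would use the duality characterizations of $d_{\rm TV}$ and $d_{\mathcal{W}_1}$ recalled in the Notation section to deduce, for every smooth bounded $\psi:\T^d\to\R$,
\[
\Bigl|\int\psi\,d\mu_2-\int\psi\,d\mu_1\Bigr|\ \le\ \bigl(2\|\psi\|_{\mathcal{C}^0}d_{\rm TV}(\mu_1,\mu_2)\bigr)\wedge\bigl({\rm Lip}(\psi)\,d_{\mathcal{W}_1}(\mu_1,\mu_2)\bigr),
\]
and apply this (after differentiating under the integral sign, which is licit thanks to Assumption~\ref{ass:kernel}) to obtain
\[
\|N^\epsilon[\mu_2]-N^\epsilon[\mu_1]\|_{\mathcal{C}^k}+\|D^\epsilon[\mu_2]-D^\epsilon[\mu_1]\|_{\mathcal{C}^k}\ \le\ C'_{\epsilon,k}\bigl(d_{\rm TV}(\mu_1,\mu_2)\wedge d_{\mathcal{W}_1}(\mu_1,\mu_2)\bigr).
\]

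Next I would pass to the ratio using the identity
\[
F^\epsilon[\mu_2]-F^\epsilon[\mu_1]=\frac{N^\epsilon[\mu_2]-N^\epsilon[\mu_1]}{D^\epsilon[\mu_2]}-\frac{N^\epsilon[\mu_1]\bigl(D^\epsilon[\mu_2]-D^\epsilon[\mu_1]\bigr)}{D^\epsilon[\mu_1]D^\epsilon[\mu_2]}.
\]
The Banach-algebra structure of $\mathcal{C}^k(\T^m)$ reduces the problem to bounding $\|1/D^\epsilon[\mu]\|_{\mathcal{C}^k}$ and $\|N^\epsilon[\mu]\|_{\mathcal{C}^k}$ uniformly in $\mu$. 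The lower bound $D^\epsilon[\mu]\ge m_\epsilon>0$ from Assumption~\ref{ass:kernel}, combined with the Fa\`a di Bruno formula applied to $t\mapsto 1/t$ on $[m_\epsilon,\infty)$, yields a uniform $\mathcal{C}^k$ bound on $1/D^\epsilon[\mu]$; the uniform $\mathcal{C}^k$ bound on $N^\epsilon[\mu]$ follows as in the proof of Lemma~\ref{lem:bounds-F_A}. Plugging everything together produces the required Lipschitz estimate for $F^\epsilon$.

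For the $A^\epsilon$ part, I would use the Euler-Lagrange PDE from~\eqref{eq:def-Aepsilon}:
\[
\Delta\bigl(A^\epsilon[\mu_2]-A^\epsilon[\mu_1]\bigr)={\rm div}\bigl(F^\epsilon[\mu_2]-F^\epsilon[\mu_1]\bigr),
\]
both sides having zero mean on $\T^m$. Standard elliptic regularity on the torus (applied iteratively, since the right-hand side is $\mathcal{C}^\infty$) gives $\|A^\epsilon[\mu_2]-A^\epsilon[\mu_1]\|_{\mathcal{C}^k}\le C''_{\epsilon,k}\|F^\epsilon[\mu_2]-F^\epsilon[\mu_1]\|_{\mathcal{C}^{k+1}}$ up to a bound on the mean, which is controlled by the normalization $\int A^\epsilon[\mu]\,dz=0$ together with Poincar\'e's inequality. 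Combining this with the Lipschitz estimate for $F^\epsilon$ (applied at order $k+1$) gives the conclusion.

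The only mildly delicate point is keeping track of the $k\leftrightarrow k+1$ loss between $F^\epsilon$ and $A^\epsilon$ in the elliptic step, and making sure that the Lipschitz constants of the integrands, which involve both derivatives of $V$ and of $K_\epsilon$, remain finite for fixed $\epsilon$; neither issue is a real obstruction since $V\in\mathcal{C}^\infty$ and $M_\epsilon^{(\ell)}<\infty$ for every $\ell$. Everything else is bookkeeping.
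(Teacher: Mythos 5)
Your proposal is correct and follows essentially the same route as the paper: you use the same algebraic identity to decompose $F^\epsilon[\mu_2]-F^\epsilon[\mu_1]$ into terms involving $\iint(\cdot)\,d(\mu_2-\mu_1)$, bound those via the dual characterizations of $d_{\rm TV}$ and $d_{\mathcal W_1}$ together with the $\mathcal{C}^k$-bounds on $z\mapsto\nabla_z V(y,z')K_\epsilon(z',z)$ and the lower bound $D^\epsilon\ge m_\epsilon$, and then transfer to $A^\epsilon$ via elliptic regularity for $\Delta A={\rm div}\,F$. You spell out a few steps the paper leaves implicit (differentiation under the integral, the Banach-algebra/Faà di Bruno bookkeeping for $1/D^\epsilon$, the $k\leftrightarrow k+1$ bookkeeping in the elliptic step), but these are genuine elaborations, not deviations; the one-order loss in the elliptic estimate is harmless here exactly as you note, since the $F^\epsilon$ bound holds for all $k$.
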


\begin{proof}[Proof of Lemma~\ref{lem:Lip-F_A}]
First, observe that
\begin{align*}
F^\epsilon[\mu_2]-F^\epsilon[\mu_1]&=\frac{\iint \nabla_zV(y,z)K_\epsilon(z,\cdot)d(\mu_2-\mu_1)(y,z)}{\iint K_\epsilon(z,\cdot)d\mu_2(y,z)}\\
&-\frac{\iint \nabla_z V(y,z)K_\epsilon(z,\cdot)d\mu_1(y,z) \iint K_\epsilon(z,\cdot)d(\mu_2-\mu_1)(y,z)}{\iint K_\epsilon(z,\cdot)d\mu_1(y,z)\iint K_\epsilon(z,\cdot)d\mu_2(y,z)}.
\end{align*}

Using the characterizations of total variation and Wasserstein distances and the regularity properties of $V$ and $K_\epsilon$ (Assumption~\ref{ass:kernel}), proceeding as in the proof of Lemma~\ref{lem:bounds-F_A} then yields
\[
\|F^\epsilon[\mu_2]-F^\epsilon[\mu_1]\|_{\mathcal{C}^k(\T^m,\T^m)}\le L_{\epsilon,k}d_(\mu_1,\mu_2),
\]
for all $\mu_1,\mu_2\in\mathcal{P}(\T^d)$, with $L_{\epsilon,k}\in(0,\infty)$, with $d=d_{\mathcal{W}_1}$ and $d=d_{TV}$.

It remains to apply the same arguments as in the proof of Lemma~\ref{lem:bounds-F_A} to obtain
\[
\|\tilde{A}^\epsilon[\mu_2]-A^\epsilon[\mu_1]\|_{\mathcal{C}^k(\T^m,\T)}+\|A^\epsilon[\mu_2]-A^\epsilon[\mu_1]\|_{\mathcal{C}^k(\T^m,\T)}\le L_{\epsilon,k}d(\mu_1,\mu_2),
\]
which concludes the proof of Lemma~\ref{lem:Lip-F_A}.
\end{proof}

\subsection{Well-posedness}

Let $T\in(0,\infty)$ be an arbitrary positive real number. Introduce the Banach spaces
\[
\mathcal{C}([0,T],\T^d)~,\quad E=L^2\bigl(\Omega,\mathcal{C}([0,T],\T^d)\bigr),
\]
equipped with the norms defined by
\[
\|x\|_\alpha=\underset{0\le t\le T}\sup~e^{-\alpha t}|x(t)|~,\quad   \vvvert X\vvvert_{\alpha}=\Bigl(\E\bigl[\|X\|_\alpha^2\bigr]\Bigr)^{\frac12},
\]
depending on the auxiliary parameter $\alpha\in(0,\infty)$. Let $\Phi:E\to E$ be defined as follows: for all $x=\bigl(y_t,z_t\bigr)_{t\ge 0}$, let $\mu_t^x=\frac{1}{1+t}\bigl(\mu_0+\int_{0}^{t}\delta_{x_s}ds\bigr)$ and $A_t^x=A^\epsilon[\mu_t^x]$, for all $t\ge 0$. Then $X=\Phi(x)$ is the solution $X=\bigl(Y(t),Z(t))_{t\ge 0}$ of
\[
\begin{cases}
dY(t)=-\nabla_yV(y_t,z_t)dt+\sqrt{2}dW^{(d-m)}(t),\\
dZ(t)=-\nabla_zV(y_t,z_t)dt+\nabla A_t^x(z_t)dt+\sqrt{2}dW^{(d)}(t),
\end{cases}
\]
with initial condition $(Y(0),Z(0))=x_0\in \T^d$, which is fixed.

If $\alpha$ is sufficiently large, then the mapping $\Phi$ is a contraction, due to Lemma~\ref{lem:contraction-wellposed} stated below.
\begin{lemma}\label{lem:contraction-wellposed}
There exists $C\in(0,\infty)$ such that for all $\alpha\in(0,\infty)$, and for all $x^1,x^2\in E$, 
\[
\vvvert\Phi(x_2)-\Phi(x_1)\vvvert_{\alpha}\le \frac{C}{\alpha}\vvvert x_2-x_1\vvvert_{\alpha}.
\]
\end{lemma}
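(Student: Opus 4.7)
The key observation, which makes the whole argument work, is that $\Phi(x^1)$ and $\Phi(x^2)$ are built from the \emph{same} Brownian motion $W$ by construction, so in their difference the stochastic integrals cancel and one obtains the almost sure identity
\[
\Phi(x^2)(t)-\Phi(x^1)(t)=-\int_0^t\bigl(\nabla V(x^2_s)-\nabla V(x^1_s)\bigr)ds+\int_0^t\bigl(0,\nabla A^\epsilon[\mu^{x^2}_s](z^2_s)-\nabla A^\epsilon[\mu^{x^1}_s](z^1_s)\bigr)ds.
\]
The plan is to derive a pathwise Gronwall-type estimate on the right-hand side using the auxiliary lemmas, and only at the very end square and take the $L^2$ expectation; the noise cancellation is what allows this purely deterministic treatment.

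For the first integral, Lipschitz continuity of $\nabla V$ on the compact torus $\T^d$ (since $V\in\mathcal{C}^\infty$) immediately bounds it by $C\int_0^t|x^2_s-x^1_s|ds$. For the bias term, I would split
\[
\nabla A^\epsilon[\mu^{x^2}_s](z^2_s)-\nabla A^\epsilon[\mu^{x^1}_s](z^1_s)=\bigl(\nabla A^\epsilon[\mu^{x^2}_s](z^2_s)-\nabla A^\epsilon[\mu^{x^2}_s](z^1_s)\bigr)+\bigl(\nabla A^\epsilon[\mu^{x^2}_s](z^1_s)-\nabla A^\epsilon[\mu^{x^1}_s](z^1_s)\bigr).
\]
The first summand is bounded by $\|A^\epsilon[\mu^{x^2}_s]\|_{\mathcal{C}^2}\cdot|z^2_s-z^1_s|\le C_{\epsilon,2}|z^2_s-z^1_s|$ via Lemma~\ref{lem:bounds-F_A} with $k=2$; the second by $\|A^\epsilon[\mu^{x^2}_s]-A^\epsilon[\mu^{x^1}_s]\|_{\mathcal{C}^1}\le L_{\epsilon,1}\,d_{\mathcal{W}_1}(\mu^{x^2}_s,\mu^{x^1}_s)$ via Lemma~\ref{lem:Lip-F_A}.

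The Wasserstein distance is handled directly from the definition $\mu^x_s=\tfrac{1}{1+s}\bigl(\mu_0+\int_0^s\delta_{x_r}dr\bigr)$: testing against an arbitrary $1$-Lipschitz function $\psi$ gives $|\!\int\psi\,d(\mu^{x^2}_s-\mu^{x^1}_s)|\le\tfrac{1}{1+s}\int_0^s|x^2_r-x^1_r|dr$, hence $d_{\mathcal{W}_1}(\mu^{x^2}_s,\mu^{x^1}_s)\le\int_0^s|x^2_r-x^1_r|dr$. Assembling the three estimates gives the pathwise bound
\[
|\Phi(x^2)(t)-\Phi(x^1)(t)|\le C_\epsilon\int_0^t|x^2_s-x^1_s|ds+L_{\epsilon,1}\int_0^t\int_0^s|x^2_r-x^1_r|dr\,ds.
\]

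Finally, I would insert $|x^2_s-x^1_s|\le e^{\alpha s}\|x^2-x^1\|_\alpha$ and weight by $e^{-\alpha t}$: an elementary computation shows $e^{-\alpha t}\int_0^t e^{\alpha s}ds\le\tfrac{1}{\alpha}$ and $e^{-\alpha t}\int_0^t\int_0^s e^{\alpha r}dr\,ds\le\tfrac{1}{\alpha^2}$. Taking the supremum over $t\in[0,T]$, squaring and applying $\E$ yields $\vvvert\Phi(x^2)-\Phi(x^1)\vvvert_\alpha\le\tfrac{C}{\alpha}\vvvert x^2-x^1\vvvert_\alpha$ for a constant $C=C(\epsilon,V,T)$ independent of $\alpha\ge 1$. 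The only conceptual subtlety is the synchronous-coupling step that cancels the noise; once this is noticed the estimate is a routine sample-path Gronwall argument based on the already established uniform and Lipschitz bounds on $A^\epsilon[\mu]$.
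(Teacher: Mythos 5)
Your proof is correct and follows essentially the same route as the paper's: you note the synchronous noise cancellation, split the bias increment into the same two pieces (Lipschitz in $z$ via Lemma~\ref{lem:bounds-F_A}, Lipschitz in $\mu$ via Lemma~\ref{lem:Lip-F_A}), bound the Wasserstein distance of the empirical measures by $\int_0^s|x^2_r-x^1_r|\,dr$, and close the pathwise Gronwall estimate with the $\alpha$-weighted norm before taking $L^2$ expectation. The only cosmetic differences are that the paper treats the $Y$ and $Z$ components separately, and that (as you rightly flag) the final $\tfrac1{\alpha^2}$ contribution is absorbed into $\tfrac{C}{\alpha}$ only for $\alpha$ bounded away from $0$, which is all that is needed since one ultimately chooses $\alpha$ large.
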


\begin{proof}[Proof of Lemma~\ref{lem:contraction-wellposed}]
Let $x^1=(y^1,z^1)$ and $x^2=(y^2,z^2)$ be two elements of $E$, and set $X^1=\Phi(x^1)$, $X^2=\Phi(x^2)$. Then
\[
\frac{d}{dt}\bigl(Y^2(t)-Y^1(t)\bigr)=\nabla_y V(y_t^1,z_t^1)-\nabla_y V(y_t^2,z_t^2)
\]
and
\[
\frac{d}{dt}\bigl(Z^2(t)-Z^1(t)\bigr)=\nabla_z V(y_t^1,z_t^1)-\nabla_z V(y_t^2,z_t^2)+\nabla A_t^2(z_t^2)-\nabla A_t^1(z_t^1),
\]
where $A_t^i=A^\epsilon[\mu_t^i]$ and $\mu_t^i=\frac{1}{1+t}(\mu_0+\int_{0}^{t}\delta_{x_s^i}ds)$.

First, since $V$ is of class $\mathcal{C}^2$, for all $t\ge 0$, one has the almost sure estimate
\begin{align*}
e^{-\alpha t}|Y^2(t)-Y^1(t)|&\le Ce^{-\alpha t}\int_{0}^{t}\bigl(|y_s^2-y_s^1|+|z_s^2-z_s^1|\bigr)ds\\
&\le Ce^{-\alpha t}\int_{0}^{t}e^{\alpha s}ds \|x^2-x^1\|_{\alpha}\\
&\le \frac{C}{\alpha}\|x^2-x^1\|_{\alpha}.
\end{align*}
Second, similarly one has, for all $t\ge 0$,
\begin{align*}
e^{-\alpha t}|Z^2(t)-Z^1(t)|&\le \frac{C}{\alpha}\|x^2-x^1\|_{\alpha}+e^{-\alpha t}\int_{0}^{t}|\nabla A_s^2(z_s^2)-\nabla A_s^1(z_s^1)|ds\\
&\le \frac{C}{\alpha}\|x^2-x^1\|_{\alpha}+e^{-\alpha t}\int_{0}^{t}|\nabla A_s^2(z_s^2)-\nabla A_s^2(z_s^1)|ds+e^{-\alpha t}\int_{0}^{t}|\nabla A_s^2(z_s^1)-\nabla A_s^1(z_s^1)|ds\\
&\le \frac{C}{\alpha}\|x^2-x^1\|_{\alpha}+e^{-\alpha t}\int_{0}^{t}\|A_s^2-A_s^1\|_{\mathcal{C}^1}ds,
\end{align*}
owing to Lemma~\ref{lem:bounds-F_A}. In addition, owing to Lemma~\ref{lem:Lip-F_A}, one has
\begin{align*}
\|A_s^2-A_s^1\|_{\mathcal{C}^1}&=\|A^\epsilon[\mu_s^2]-A^\epsilon[\mu_s^1]\|_{\mathcal{C}^1}\\
&\le L_{\epsilon,1}d_{\mathcal{W}_1}(\mu_s^1,\mu_s^2)\le L_{\epsilon,1}\int_{0}^{s}|x^2(r)-x^1(r)|dr\\
&\le L_{\epsilon,1}\int_{0}^{s}e^{\alpha r}dr \|x^2-x^1\|_{\alpha}\\
&\le \frac{L_{\epsilon,1}}{\alpha}e^{\alpha s}\|x^2-x^1\|_{\alpha}.
\end{align*}
Finally, one obtains the almost sure estimate,
\[
\|\Phi(x^2)-\Phi(x^1)\|_{\alpha}=\underset{t\ge 0}\sup~e^{-\alpha t}|X^2(t)-X^1(t)|\le \frac{C}{\alpha}\|x^2-x^1\|_{\alpha},
\]
then taking expectation concludes the proof of Lemma~\ref{lem:contraction-wellposed}.
\end{proof}

The proof of Proposition~\ref{propo:wellposed} is then straightforward.
\begin{proof}[Proof of Proposition~\ref{propo:wellposed}]
Observe that the following claims are satisfied.
\begin{itemize}
\item Owing to Lemma~\ref{lem:bounds-F_A}, for all $x\in E$, one has the almost sure estimate $\underset{t\ge 0}\sup~\|\nabla A_t^x\|_{\mathcal{C}^0}\le C_{\epsilon,0}$, and owing to Lemma~\ref{lem:Lip-F_A}, the mapping $t\mapsto A_t^x$ is Lipschitz continuous. Thus the mapping $\Phi$ is well-defined.
\item The process $\bigl(Y(t),Z(t),A_t,\mu_t\bigr)_{t\ge 0}$ solves~\eqref{eq:ABF} if and only if $X=(Y,Z)$ is a fixed point of $\Phi$.
\item The mapping $\Phi:E\to E$ is a contraction if $\alpha$ is sufficiently large, and admits a unique fixed point $X$, owing to Lemma~\ref{lem:contraction-wellposed}.
\end{itemize}
Since the initial conditions $x_0$ and $\mu_0$, and the time $T\in(0,\infty)$ are arbitrary, these arguments imply that the global well-posedness of~\eqref{eq:ABF} and this concludes the proof.
\end{proof}

\section{The limiting flow}\label{sec:ODE}

Define the mapping $\Pi^\epsilon:\mu\in\mathcal{P}(\T^d)\mapsto \Pi^\epsilon[\mu]\in \mathcal{P}(\T^d)$, for $\epsilon\in(0,1]$, as follows:
\[
\Pi^\epsilon[\mu]=Z^{\epsilon}[\mu]^{-1}e^{-V(y,z)+A^\epsilon[\mu](z)}dydz,
\]
with $Z^\epsilon[\mu]=\iint e^{-V(y,z)+A^\epsilon[\mu](z)}dydz$. The notation $V_\mu^\epsilon(y,z)=V(y,z)-A^\epsilon[\mu](z)$ is used in the sequel. The probability measure $\Pi^{\epsilon}[\mu]$ is the unique invariant distribution for the system
\[
\begin{cases}
dY_t^A=-\nabla_yV(Y_t^A,Z_t^A)dt+\sqrt{2}dW_t^{(d-m)},\\
dZ_t^A=-\nabla_zV(Y_t^A,Z_t^A)dt+\nabla A(Z_t^A)dt+\sqrt{2}dW_t^{(m)}
\end{cases}
\]
with $A=A^\epsilon[\mu]$. With notations used above, $\Pi^\epsilon[\mu]=\mu_\star^{A^{\epsilon}[\mu]}$.

The objectives of this section are twofold. First, one proves that, for every $\pi\in\mathcal{P}(\T^d)$, there exists a unique solution $\bigl(\Phi^\epsilon(t,\pi)\bigr)_{t\ge 0}$ of the equation
\[
\Phi^{\epsilon}(t,\pi)=e^{-t}\pi+\int_{0}^{t}e^{s-t}\Pi^\epsilon[\Phi^\epsilon(s,\pi)]ds.
\]
In addition, $\pi_t^\epsilon=\Phi^\epsilon(t,\pi)$ solves, in a weak sense, the following ordinary differential equation
\[
\dot{\pi}_t^\epsilon=\Pi^\epsilon[\pi_t^\epsilon]-\pi_t^\epsilon~,\quad \pi_0^\epsilon=\pi.
\]
Second, one relates the properties of the empirical measure $\bigl(\mu_t\bigr)_{t\ge 0}$ in the regime $t\to\infty$, with the behavior of the limit flow, using the notion of Asymptotic Pseudo-Trajectories.

\subsection{Well-posedness of the limiting flow}

Let $M^\epsilon=\underset{\mu\in\mathcal{P}(\T^d)}\sup~\|A^\epsilon[\mu]\|_{\mathcal{C}^0(\T^m,\R)}$, and $M_\star=\|A_\star\|_{\mathcal{C}^0(\T^m,\R)}$. Note that $M^\epsilon<\infty$ due to Lemma~\ref{lem:bounds-F_A}. Recall that $L_{0,\epsilon}$ is defined in Lemma~\ref{lem:Lip-F_A}.

\begin{lemma}\label{lem:Lip_PiEpsilon}
Let $L(\epsilon)=2L_{\epsilon,0}e^{4(M^\epsilon+M_\star)}$. Then for all $\mu^1,\mu^2\in\mathcal{P}(\T^d)$, one has
\[
d_{TV}\bigl(\Pi^\epsilon[\mu^1],\Pi^\epsilon[\mu^2]\bigr)\le L(\epsilon)d_{TV}(\mu^1,\mu^2).
\]
\end{lemma}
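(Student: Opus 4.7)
The plan is to reduce the lemma to an estimate of the Lipschitz sensitivity of the Gibbs map $A\mapsto \mu_\star^A$ from $(\mathcal{C}^0,\|\cdot\|_{\mathcal{C}^0})$ to $(\mathcal{P}(\T^d),d_{TV})$, and then compose with Lemma~\ref{lem:Lip-F_A} applied with $k=0$, which controls $\|A^\epsilon[\mu^2]-A^\epsilon[\mu^1]\|_{\mathcal{C}^0}$ by $L_{\epsilon,0}\,d_{TV}(\mu^1,\mu^2)$. Thus it suffices to show that for any smooth $A_1,A_2:\T^m\to\R$ with $\|A_i\|_{\mathcal{C}^0}\le M^\epsilon$, the Gibbs densities $\rho_i(y,z):=Z_i^{-1}e^{-V(y,z)+A_i(z)}$, with $Z_i:=\iint e^{-V+A_i}\,dy\,dz$, satisfy
\[
\int|\rho_2-\rho_1|\,dy\,dz\ \le\ C(\epsilon)\,\|A_2-A_1\|_{\mathcal{C}^0},
\]
for some constant $C(\epsilon)\le 2e^{4(M^\epsilon+M_\star)}$ (which will not be sharp but matches the stated constant after the factor $\tfrac12$ in the definition of $d_{TV}$).

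The first ingredient is a two-sided bound on the partition functions. From the defining identity $\int_{\T^{d-m}}e^{-V(y,z)}\,dy=e^{-A_\star(z)}$ and the normalization $\int_{\T^m}e^{-A_\star}=1$ recalled in the introduction, one rewrites $Z_i=\int_{\T^m}e^{A_i(z)-A_\star(z)}\,dz$, whence $Z_i$ and $Z_i^{-1}$ both lie in $[e^{-(M^\epsilon+M_\star)},\,e^{M^\epsilon+M_\star}]$. This is the only place where $M_\star$ enters the argument.

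For the comparison itself, I would add and subtract $Z_2^{-1}e^{-V+A_1}$ to split
\[
\rho_2-\rho_1\ =\ Z_2^{-1}\bigl(e^{-V+A_2}-e^{-V+A_1}\bigr)+e^{-V+A_1}\bigl(Z_2^{-1}-Z_1^{-1}\bigr),
\]
and use the elementary inequality $|e^a-e^b|\le e^{\max(a,b)}|a-b|$ applied pointwise with $a=-V+A_2$ and $b=-V+A_1$ to get $|e^{-V+A_2}-e^{-V+A_1}|\le e^{-V+M^\epsilon}\|A_2-A_1\|_{\mathcal{C}^0}$. Integrating against $dy\,dz$ and using $\iint e^{-V}\,dy\,dz=1$ bounds both $\iint|e^{-V+A_2}-e^{-V+A_1}|\,dy\,dz$ and $|Z_2-Z_1|$ by $e^{M^\epsilon}\|A_2-A_1\|_{\mathcal{C}^0}$. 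Combining this with $|Z_2^{-1}-Z_1^{-1}|=Z_1^{-1}Z_2^{-1}|Z_2-Z_1|$, the identity $\iint e^{-V+A_1}\,dy\,dz=Z_1$, and the bounds on the $Z_i$ collapses everything into $\int|\rho_2-\rho_1|\,dy\,dz\le 2e^{4(M^\epsilon+M_\star)}\|A_2-A_1\|_{\mathcal{C}^0}$ after crude majorations. Composing with Lemma~\ref{lem:Lip-F_A} then delivers $L(\epsilon)=2L_{\epsilon,0}e^{4(M^\epsilon+M_\star)}$ as stated.

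No step is subtle; the only thing to watch is the exponent bookkeeping so that the final constant does not exceed the announced $4(M^\epsilon+M_\star)$, which is why the coarse bound $Z_i^{-1}\le e^{M^\epsilon+M_\star}$ is used throughout rather than sharper estimates. A sharper argument would give an exponent of $2M^\epsilon+M_\star$, but the stated loose form is convenient for later use.
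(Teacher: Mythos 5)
Your proof is correct and follows essentially the same route as the paper: write out the densities, split $\rho_2-\rho_1$ into a numerator term and a normalizing-constant term, bound the partition functions above and below using $Z_i=\int e^{A_i-A_\star}\,dz\in[e^{-(M^\epsilon+M_\star)},e^{M^\epsilon+M_\star}]$, and then compose with Lemma~\ref{lem:Lip-F_A} at $k=0$. The only cosmetic difference is that the paper first integrates out $y$ to express everything through $e^{-A_\star(z)}$ before performing the same triangle-inequality decomposition, and it does not track the factor $\tfrac12$ in the definition of $d_{TV}$ (harmless, since the stated $L(\epsilon)$ is only claimed as an upper bound).
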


\begin{proof}[Proof of Lemma~\ref{lem:Lip_PiEpsilon}]

\begin{align*}
d_{TV}\bigl(\Pi^\epsilon[\mu^1],\Pi^\epsilon[\mu^2]\bigr)&=\iint_{\T^d} e^{-V(y,z)}\big|\frac{e^{A^\epsilon[\mu^1](z)}}{Z^\epsilon[\mu^1]}-\frac{e^{A^\epsilon[\mu^2](z)}}{Z^\epsilon[\mu^2]}\big|dydz\\
&=\int_{\T^m}e^{-A_\star(z)}\big|\frac{e^{A^\epsilon[\mu^1](z)}}{Z^\epsilon[\mu^1]}-\frac{e^{A^\epsilon[\mu^2](z)}}{Z^\epsilon[\mu^2]}\big|dz\\
&\le \int_{\T^m}\frac{e^{-A_\star(z)}}{Z^\epsilon[\mu^1]}\big|e^{A^\epsilon[\mu^1](z)}-e^{A^\epsilon[\mu^2](z)}\big|dz\\
&~+\int_{\T^m}\frac{e^{A^\epsilon[\mu^2](z)-A_\star(z)}}{Z^\epsilon[\mu^1]Z^\epsilon[\mu^2]}dz\big|Z^\epsilon[\mu^1]-Z^\epsilon[\mu^2]\big|.
\end{align*}
Using the lower bound
\[
Z^\epsilon[\mu]=\iint_{\T^d} e^{-V(y,z)+A^\epsilon[\mu](z)}dydz=\int_{\T^m} e^{-A_\star(z)+A^\epsilon[\mu](z)}dz\ge e^{-M_\star-M^\epsilon},
\]
and the upper bound
\[
\big|Z^\epsilon[\mu^1]-Z^\epsilon[\mu^2]\big|\le e^{M^\epsilon+M_\star}\int_{\T^m}|A^\epsilon[\mu^1](z)-A^\epsilon[\mu^2](z)|dz,
\]
one obtains
\begin{align*}
d_{TV}\bigl(\Pi^\epsilon[\mu^1],\Pi^\epsilon[\mu^2]\bigr)&\le 2e^{4(M^\epsilon+M_\star)}\int_{\T^m}|A^\epsilon[\mu^1](z)-A^\epsilon[\mu^2](z)|dz\\
&\le 2e^{4(M^\epsilon+M_\star)}\|A^\epsilon[\mu^1]-A^\epsilon[\mu_2]\|_{\mathcal{C}^0}\\
&\le 2L_{\epsilon,0}e^{4(M^\epsilon+M_\star)}d_{\rm TV}(\mu_1,\mu_2),
\end{align*}
where the last inequality follows from Lemma~\ref{lem:Lip-F_A}. This concludes the proof of Lemma~\ref{lem:Lip_PiEpsilon}.
\end{proof}

\begin{propo}
Let $\pi\in\mathcal{P}(\T^d)$. Then there exists a unique solution $\bigl(\Phi^\epsilon(t,\pi)\bigr)_{t\ge 0}$, with values in $\mathcal{C}\bigl([0,\infty),\mathcal{P}(\T^d)\bigr)$ (where $\mathcal{P}(\T^d)$ is equipped with the total variation distance $d_{TV}$), of the equation
\[
\Phi^{\epsilon}(t,\pi)=e^{-t}\pi+\int_{0}^{t}e^{s-t}\Pi^\epsilon[\Phi^\epsilon(s,\pi)]ds.
\]
\end{propo}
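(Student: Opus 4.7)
The strategy is a standard Picard / Banach fixed point argument on an appropriate space of continuous paths, exploiting the Lipschitz estimate already established in Lemma~\ref{lem:Lip_PiEpsilon}. Precisely, I would work on the space $\mathcal{E}=\mathcal{C}\bigl([0,\infty),\mathcal{P}(\T^d)\bigr)$, where $\mathcal{P}(\T^d)$ is equipped with the total variation distance $d_{TV}$, and, for a parameter $\alpha>0$ to be chosen later, endow $\mathcal{E}$ with the weighted metric
\[
D_\alpha(\phi_1,\phi_2)=\sup_{t\ge 0}\,e^{-\alpha t}\,d_{TV}\bigl(\phi_1(t),\phi_2(t)\bigr).
\]
Since $d_{TV}\le 1$ on pairs of probability measures, this is a finite metric, and a standard argument shows $(\mathcal{E},D_\alpha)$ is complete. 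The solutions of the integral equation are exactly the fixed points of the operator $T^\epsilon:\mathcal{E}\to\mathcal{E}$ defined by
\[
(T^\epsilon\phi)(t)=e^{-t}\pi+\int_{0}^{t}e^{s-t}\,\Pi^\epsilon[\phi(s)]\,ds.
\]

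The first thing I would verify is that $T^\epsilon$ takes values in $\mathcal{E}$. Observe that $e^{-t}+\int_0^t e^{s-t}\,ds=1$, so $(T^\epsilon\phi)(t)$ is a convex combination of the probability measures $\pi$ and $\bigl\{\Pi^\epsilon[\phi(s)]\bigr\}_{s\in[0,t]}$, hence is itself a probability measure. Continuity in $t$ with respect to $d_{TV}$ follows routinely from the integral form together with the fact that $s\mapsto\Pi^\epsilon[\phi(s)]$ is bounded in total variation (it lies in $\mathcal{P}(\T^d)$). Thus $T^\epsilon$ is well-defined as an endomorphism of $\mathcal{E}$.

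The contraction estimate is the core computation. Using the convexity of $d_{TV}$ in its arguments and Lemma~\ref{lem:Lip_PiEpsilon}, I would bound
\[
d_{TV}\bigl((T^\epsilon\phi_1)(t),(T^\epsilon\phi_2)(t)\bigr)\le\int_0^t e^{s-t}\,d_{TV}\bigl(\Pi^\epsilon[\phi_1(s)],\Pi^\epsilon[\phi_2(s)]\bigr)\,ds\le L(\epsilon)\int_0^t e^{s-t}\,d_{TV}\bigl(\phi_1(s),\phi_2(s)\bigr)\,ds.
\]
Inserting $d_{TV}(\phi_1(s),\phi_2(s))\le D_\alpha(\phi_1,\phi_2)\,e^{\alpha s}$ and multiplying by $e^{-\alpha t}$ yields
\[
e^{-\alpha t}\,d_{TV}\bigl((T^\epsilon\phi_1)(t),(T^\epsilon\phi_2)(t)\bigr)\le L(\epsilon)\,D_\alpha(\phi_1,\phi_2)\,e^{-(1+\alpha)t}\int_0^t e^{(1+\alpha)s}\,ds\le \frac{L(\epsilon)}{1+\alpha}\,D_\alpha(\phi_1,\phi_2),
\]
so choosing $\alpha>L(\epsilon)-1$ makes $T^\epsilon$ a strict contraction on $(\mathcal{E},D_\alpha)$. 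The Banach fixed point theorem then produces a unique $\Phi^\epsilon(\cdot,\pi)\in\mathcal{E}$ satisfying the integral equation.

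There is no real obstacle here: the nontrivial ingredient, namely the Lipschitz continuity of $\mu\mapsto\Pi^\epsilon[\mu]$ in total variation, is exactly Lemma~\ref{lem:Lip_PiEpsilon}, and the boundedness $d_{TV}\le 1$ is what allows us to run the Picard argument directly on the half-line $[0,\infty)$ (rather than on an interval $[0,T]$ followed by a gluing procedure). The only mild subtlety worth flagging is the completeness of $\mathcal{E}$ under $D_\alpha$, which reduces to completeness of $(\mathcal{P}(\T^d),d_{TV})$ (true, since it is a closed subset of the Banach space of signed measures with bounded variation) together with uniform convergence on $[0,T]$ for every $T$, implied by the $\alpha$-weighted bound.
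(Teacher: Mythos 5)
Your proposal is correct and takes essentially the same approach as the paper: both run a Picard/Banach fixed point argument on the path space equipped with the weighted metric $\sup_{t}e^{-\alpha t}d_{TV}$, with the Lipschitz estimate of Lemma~\ref{lem:Lip_PiEpsilon} as the key ingredient. The only cosmetic difference is that the paper treats uniqueness via Gronwall and existence via an explicit Picard iteration on $[0,T]$ for arbitrary $T$, whereas you invoke the Banach fixed point theorem directly on $[0,\infty)$ after noting that $d_{TV}\le1$ makes the weighted metric finite and complete there.
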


\begin{proof}
Uniqueness is a straightforward consequence of Lemma~\ref{lem:Lip_PiEpsilon} and of Gronwall Lemma.

Existence is obtained using a Picard iteration argument. Precisely, introduce the mapping $\Psi:\mathcal{C}\bigl([0,\infty),\mathcal{P}(\T^d)\bigr)\to \mathcal{C}\bigl([0,\infty),\mathcal{P}(\T^d)\bigr)$, be defined by
\[
\Psi(\pi)(t)=e^{-t}\pi+\int_{0}^{t}e^{s-t}\Pi^\epsilon[\pi_s]ds,
\]
for $\pi=\bigl(\pi_t\bigr)_{t\ge 0}$.

Let $d_\alpha(\pi^1,\pi^2)=\underset{t\ge 0}\sup~e^{-\alpha t}d_{TV}(\pi_t^1,\pi_t^2)$, where $\alpha>0$ is chosen below. Then, using Lemma~\ref{lem:Lip_PiEpsilon}, one has
\[
d_\alpha\bigl(\Psi(\pi^1),\Psi(\pi^2)\bigr)\le \frac{L(\epsilon)}{\alpha}d_\alpha(\pi^1,\pi^2).
\]
Choose $\alpha=2L(\epsilon)$, and define
\[
\pi^0=\bigl(\pi_t^0=\pi\bigr)_{t\ge 0}~,\quad \pi^{n+1}=\Psi(\pi^n),~n\ge 0,
\]
using the Picard iteration method. Let $T\in(0,\infty)$ be an arbitrary positive real number. Since $\mathcal{C}\bigl([0,T],\mathcal{P}(\T^d)\bigr)$ is a complete metric space (equipped with the distance $d_\alpha$), then $\bigl(\pi^n\bigr)_{n\in\N}$ converges when $n\to\infty$, and the limit $\pi^\infty$ solves the fixed point equation $\pi^\infty=\Psi(\pi^\infty)$, which proves the existence of a solution, and concludes the proof.
\end{proof}

By construction, the flow $\Phi^\epsilon:\R^+\times\mathcal{P}(\T^d)\to\mathcal{P}(\T^d)$ is continuous, when $\mathcal{P}(\T^d)$ is equipped with the total variation distance $d_{\rm TV}$. Adapting the proof of~\cite[Lemma~3.3]{BLR02}, one checks that it is also a continuous mapping when $\mathcal{P}(\T^d)$ is equipped with the distance $d_w$.

\subsection{The asymptotic pseudotrajectory property}

Recall that a continuous function $\zeta:\R^+\to\mathcal{P}(\T^d)$ is an asymptotic pseudotrajectory for $\Phi^\epsilon$, if one has
\[
\underset{s\in[0,T]}\sup~d_w\bigl(\zeta(t+s),\Phi^\epsilon(s,\zeta(t))\bigr)\underset{t\to\infty}\to 0,
\]
for all $T\in\R^+$. See for instance~\cite{B99} for details.

The following result is the rigorous formulation of the link between the dynamics of the empirical measures $\mu_t$ in the ABF algorithm, and of the limit flow.
\begin{theo}\label{theo:APT}
The process $\bigl(\mu_{e^t}\bigr)_{t\ge 0}$ is almost surely an asymptotic pseudotrajectory for $\Phi^\epsilon$.
\end{theo}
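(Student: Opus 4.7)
The proof follows the classical ODE-method template for self-interacting diffusions~\cite{BLR02}. Since $d\mu_t/dt=(\delta_{X_t}-\mu_t)/t$, the logarithmic time change $\zeta(\tau):=\mu_{e^\tau}$ satisfies the integral equation
$$\zeta(\tau+s)\;=\;e^{-s}\zeta(\tau)+\int_0^s e^{u-s}\delta_{X_{e^{\tau+u}}}\,du,$$
whereas by definition $\Phi^\epsilon(s,\zeta(\tau))=e^{-s}\zeta(\tau)+\int_0^s e^{u-s}\Pi^\epsilon[\Phi^\epsilon(u,\zeta(\tau))]\,du$. Subtracting, inserting $\pm\Pi^\epsilon[\zeta(\tau+u)]$, and applying Gronwall's lemma in the $d_{\mathcal{W}_1}$-metric (which also generates the weak topology on the compact space $\mathcal{P}(\T^d)$; a Wasserstein variant of Lemma~\ref{lem:Lip_PiEpsilon} holds, its proof carrying over verbatim thanks to the $d_{\mathcal{W}_1}$-Lipschitz estimate of Lemma~\ref{lem:Lip-F_A}), the APT claim reduces to the following averaging statement: for every smooth $f:\T^d\to\R$ and every $T>0$, almost surely
$$\sup_{s\in[0,T]}\Bigl|\frac{1}{e^{\tau+s}}\int_{e^\tau}^{e^{\tau+s}}\bigl(f(X_r)-\textstyle\int f\,d\Pi^\epsilon[\mu_r]\bigr)dr\Bigr|\;\underset{\tau\to\infty}\longrightarrow\;0,$$
which is exactly the change-of-variables $r=e^{\tau+u}$ applied to $N_\tau(s,f):=\int_0^s e^{u-s}(f(X_{e^{\tau+u}})-\int f\,d\Pi^\epsilon[\zeta(\tau+u)])du$.

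This averaging is the main content of the proof and is handled by a Poisson-equation argument. For each $\mu\in\mathcal{P}(\T^d)$, let $\mathcal{L}_\mu$ denote the uniformly elliptic generator of the frozen SDE of Section~\ref{sec:ODE} with bias $\nabla A^\epsilon[\mu]$, whose unique invariant distribution is $\Pi^\epsilon[\mu]$. By standard elliptic Schauder estimates on the compact manifold $\T^d$, together with the uniform $\mathcal{C}^k$ bounds of Lemma~\ref{lem:bounds-F_A} and the Lipschitz-in-$\mu$ bounds of Lemma~\ref{lem:Lip-F_A}, for every smooth $f$ the Poisson equation
$$\mathcal{L}_\mu Q^f_\mu \;=\; f-\textstyle\int f\,d\Pi^\epsilon[\mu],\qquad \textstyle\int Q^f_\mu\,d\Pi^\epsilon[\mu]=0,$$
admits a unique smooth solution satisfying $\sup_\mu\|Q^f_\mu\|_{\mathcal{C}^2}<\infty$ and $\|Q^f_{\mu_1}-Q^f_{\mu_2}\|_{\mathcal{C}^1}\le C_f\,d_{\mathcal{W}_1}(\mu_1,\mu_2)$. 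Applying It\^o's formula to $Q^f_{\mu_r}(X_r)$ along~\eqref{eq:ABF}, and using that $\|d\mu_r/dr\|_{TV}\le 2/r$, one obtains
$$\int_0^R\bigl(f(X_r)-\textstyle\int f\,d\Pi^\epsilon[\mu_r]\bigr)dr\;=\;\bigl[Q^f_{\mu_R}(X_R)-Q^f_{\mu_0}(X_0)\bigr]-M^f_R+O(\log R),$$
where $M^f$ is a continuous martingale with quadratic variation $\le C_f R$. The bracket is $O(1)$, and the strong law for square-integrable continuous martingales gives $M^f_R/R\to 0$ almost surely, so applying this identity at $R=e^{\tau+s}$ and $R=e^\tau$ and taking the difference, the rescaled quantity above tends to $0$ a.s., uniformly in $s\in[0,T]$.

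The Gronwall reduction requires convergence of the noise term in $d_{\mathcal{W}_1}$, i.e.\ of $\sup_{f:\mathrm{Lip}(f)\le 1}\sup_{s\le T}|\int f\,dN_\tau(s,f)|$. This is obtained from the per-$f$ convergence above by an Arzelà--Ascoli argument: the class of $1$-Lipschitz functions on $\T^d$ vanishing at a reference point is compact in $\mathcal{C}^0(\T^d)$, and $f\mapsto\int f\,dN_\tau(s,f)$ has Lipschitz constant bounded by $\|N_\tau(s)\|_{TV}\le 2$, so a countable-dense per-$f$ convergence propagates to uniform convergence over the compact class. Combined with the Gronwall closure, this delivers the asymptotic pseudo-trajectory property in $d_w$.

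\textbf{Main obstacle.} The crux of the argument is the Poisson-equation step: producing smooth solutions $Q^f_\mu$ with uniform $\mathcal{C}^2$ bounds and Lipschitz dependence in $\mu$ (with respect to $d_{\mathcal{W}_1}$), so that the parameter-variation contribution arising from It\^o's formula with the non-autonomous coefficient $\mu_r$ remains only $O(\log R)$. This relies on Schauder-type estimates for $\mathcal{L}_\mu$ whose coefficients are uniformly bounded in $\mathcal{C}^k$ (Lemma~\ref{lem:bounds-F_A}) and Lipschitz in $\mu$ (Lemma~\ref{lem:Lip-F_A}). The remaining ingredients---the log-time change, the Gronwall closure, and the martingale strong law of large numbers---are routine.
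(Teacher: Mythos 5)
Your proposal follows essentially the same skeleton as the paper: reduce the APT property to an averaging statement for test functions, solve a Poisson equation $\mathcal{L}_\mu Q_\mu^f=-(f-\int f\,d\Pi^\epsilon[\mu])$ uniformly in $\mu$, apply It\^o's formula, and control the parameter-variation and martingale contributions. The variations are genuine but minor: you re-derive the reduction through a Gronwall argument in $d_{\mathcal{W}_1}$ plus an Arzel\`a--Ascoli compactness step, whereas the paper invokes the equivalent characterization of asymptotic pseudotrajectories from \cite[Proposition~3.5]{BLR02} and works with the metric $d_w$ built on a countable family of smooth test functions, so no compactness argument is needed; and you apply It\^o directly to $Q_{\mu_r}^f(X_r)$ and close via the martingale strong law of large numbers, whereas the paper applies It\^o to $\tau^{-1}Q_{\mu_\tau}^\epsilon f(X_\tau)$, estimates $\E\bigl[\sup_s|\varepsilon_t(s)f|^2\bigr]\le C_\epsilon e^{-t}\|f\|_{\mathcal{C}^0}^2$ via Doob's inequality, and concludes by Borel--Cantelli. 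Each trade-off is legitimate, and your $\|d\mu_r/dr\|_{TV}\le 2/r$ and $O(\log R)$ bookkeeping matches the paper's treatment of $\varepsilon_t^3$.

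There is, however, a substantive gap in the step you yourself single out as the crux. You assert that ``standard elliptic Schauder estimates on $\T^d$, together with the uniform $\mathcal{C}^k$ bounds of Lemma~\ref{lem:bounds-F_A} and the Lipschitz-in-$\mu$ bounds of Lemma~\ref{lem:Lip-F_A}'' yield $\sup_\mu\|Q_\mu^f\|_{\mathcal{C}^2}<\infty$. This does not follow: Schauder estimates for $\mathcal{L}_\mu Q = g$ give a bound of the form $\|Q\|_{\mathcal{C}^{2,\alpha}}\lesssim\|g\|_{\mathcal{C}^{\alpha}}+\|Q\|_{\mathcal{C}^0}$, and Lemmas~\ref{lem:bounds-F_A}--\ref{lem:Lip-F_A} control only the coefficients of $\mathcal{L}_\mu$, not the zeroth-order norm of the solution. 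One needs, in addition, a spectral gap for $\mathcal{L}_\mu$ (equivalently a Poincar\'e inequality for $\Pi^\epsilon[\mu]$) that is \emph{uniform over} $\mu\in\mathcal{P}(\T^d)$ in order to have a well-defined solution with a uniform $L^2$ (hence $\mathcal{C}^0$) bound; without it the Schauder step has nothing to act on. This is precisely the content the paper spends the auxiliary Lemma establishing: the uniform log-Sobolev and Sobolev inequalities for $\Pi^\epsilon[\mu]$ (obtained perturbatively from the Lebesgue measure, since the density is bounded above and below uniformly in $\mu$), which give exponential $L^2$-decay, ultracontractivity, and a Bakry--\'Emery gradient bound, and hence the convergent representation $Q_\mu^\epsilon=\int_0^\infty P_t^{\epsilon,\mu}K_\mu^\epsilon\,dt$ together with the uniform $\mathcal{C}^1$ estimate~\eqref{eq:borneQmu}. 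Your proposal identifies the right obstacle but attributes its resolution to the wrong mechanism; to close the argument you must supply the uniform functional inequalities (or an equivalent uniform spectral gap), not merely coefficient regularity plus Schauder.
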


The proof requires auxiliary notations and results.  For every $\epsilon>0$ and $\mu\in\mathcal{P}(\T^d)$, let
\[
V_\mu^\epsilon(y,z)=V(y,z)-A^\epsilon[\mu](z),
\]
and define the infinitesimal generator
\[
\mathcal{L}_\mu^\epsilon=\Delta -\nabla V_\mu^\epsilon\cdot\nabla.
\]
Introduce the projection operator defined by $K_\mu^\epsilon f=f-\int fd\Pi^\epsilon[\mu]$
and let $\bigl(P_t^{\epsilon,\mu}\bigr)_{t\ge 0}$ be the semi-group generated by $\mathcal{L}_\mu^\epsilon$ on $L^2(\T^d)$.  Finally, let
\[
Q_\mu^\epsilon=\int_0^\infty P_t^{\epsilon,\mu}K_\mu^\epsilon dt\,.
\]
Then one has the following result.
\begin{lemma}
For every $\epsilon>0$, there exists $C_\epsilon\in(0,\infty)$, such that
\begin{eqnarray}\label{eq:borneQmu}
\|Q_\mu^\epsilon f\|_{\mathcal{C}^1}\le C_\epsilon\|f\|_{\mathcal{C}^0}\,,
\end{eqnarray}
for all $f\in\mathcal{C}^0(\T^d,\R)$ and all $\mu\in\mathcal{P}(\T^d)$. Moreover, $\mathcal{L}_\mu^\epsilon K_\mu^\epsilon=-K_\mu^\epsilon$.
\end{lemma}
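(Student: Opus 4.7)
The plan is to treat the two assertions in turn. I read the second assertion as the Poisson identity $\mathcal L_\mu^\epsilon Q_\mu^\epsilon = -K_\mu^\epsilon$, which is what the ODE method will require downstream; I would verify it first, then establish the $\mathcal C^1$ bound by splitting the integral defining $Q_\mu^\epsilon$ into a short-time piece (handled by parabolic smoothing) and a long-time piece (handled by exponential ergodicity).

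The Poisson identity is mostly a semigroup computation. Fix $f\in\mathcal C^0(\T^d,\R)$ and set $g=K_\mu^\epsilon f$, so that $\int g\,d\Pi^\epsilon[\mu]=0$. By Lemma~\ref{lem:bounds-F_A} the drift $-\nabla V_\mu^\epsilon = -\nabla V+\nabla A^\epsilon[\mu]$ is $\mathcal C^\infty$ with $\mathcal C^k$ norms bounded uniformly in $\mu$ (for each fixed $\epsilon$), so $\mathcal L_\mu^\epsilon$ is a uniformly elliptic operator with smooth coefficients on $\T^d$ and $(P_t^{\epsilon,\mu})_{t\ge0}$ is a strongly continuous, smoothing Markov semigroup with unique invariant measure $\Pi^\epsilon[\mu]$. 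Using $\frac{d}{dt}P_t^{\epsilon,\mu}g=\mathcal L_\mu^\epsilon P_t^{\epsilon,\mu}g$ and the fact that $P_t^{\epsilon,\mu}g\to 0$ as $t\to\infty$ (justified below by ergodicity), the fundamental theorem of calculus gives
\[
\mathcal L_\mu^\epsilon Q_\mu^\epsilon f = \int_0^\infty \frac{d}{dt}P_t^{\epsilon,\mu}g\,dt = \bigl[P_t^{\epsilon,\mu}g\bigr]_0^\infty = -g = -K_\mu^\epsilon f.
\]

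For the $\mathcal C^1$ bound, the key preliminary step is a uniform (in $\mu$) exponential ergodicity. From Lemma~\ref{lem:bounds-F_A}, $\|A^\epsilon[\mu]\|_{\mathcal C^0}\le M^\epsilon$, so the density of $\Pi^\epsilon[\mu]$ with respect to $dy\,dz$ is bounded above and below by constants depending only on $\epsilon$ and $\|V\|_{\mathcal C^0}$; combined with the Poincar\'e inequality for $dy\,dz$ on $\T^d$ and the Holley--Stroock perturbation lemma, this furnishes a Poincar\'e constant $\lambda_\epsilon>0$ independent of $\mu$. Hence, for any $g$ with $\int g\,d\Pi^\epsilon[\mu]=0$,
\[
\|P_t^{\epsilon,\mu}g\|_{L^2(\Pi^\epsilon[\mu])}\le e^{-\lambda_\epsilon t}\|g\|_{L^2(\Pi^\epsilon[\mu])} \le c_\epsilon e^{-\lambda_\epsilon t}\|g\|_{\mathcal C^0}.
\]
Now split $Q_\mu^\epsilon f = \int_0^1 P_t^{\epsilon,\mu}g\,dt + \int_1^\infty P_t^{\epsilon,\mu}g\,dt$ with $g=K_\mu^\epsilon f$ and note $\|g\|_{\mathcal C^0}\le 2\|f\|_{\mathcal C^0}$. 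For $t\in(0,1]$, classical parabolic Schauder/heat-kernel estimates for $\partial_t-\mathcal L_\mu^\epsilon$ (whose coefficients are $\mathcal C^k$-bounded uniformly in $\mu$) give a smoothing bound $\|\nabla P_t^{\epsilon,\mu}g\|_{\mathcal C^0}\le C_\epsilon t^{-1/2}\|g\|_{\mathcal C^0}$, whose time-integral over $(0,1]$ is finite. For $t\ge1$, write $P_t^{\epsilon,\mu}g = P_{1/2}^{\epsilon,\mu}\bigl(P_{t-1/2}^{\epsilon,\mu}g\bigr)$ and bootstrap: Sobolev/parabolic regularity applied to $P_{1/2}^{\epsilon,\mu}$ controls the $\mathcal C^1$ norm of the outer application by a high Sobolev (or $L^2$) norm of its argument, and ultracontractivity plus the $L^2$-decay above yield $\|P_{t-1/2}^{\epsilon,\mu}g\|_{L^2}\le C_\epsilon e^{-\lambda_\epsilon(t-1/2)}\|g\|_{\mathcal C^0}$. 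Integrating gives a finite contribution, and summing the two pieces produces the announced bound with some $C_\epsilon\in(0,\infty)$.

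The main obstacle is the uniform-in-$\mu$ character of all these estimates. It is crucial that, while the constants depend on $\epsilon$ (and indeed blow up as $\epsilon\to 0$ because $M_\epsilon^{(k)}\to\infty$), they do not depend on $\mu$; this is precisely what Lemmas~\ref{lem:bounds-F_A} and~\ref{lem:Lip-F_A} were designed to provide, through $\mu$-uniform bounds on $V_\mu^\epsilon$ in every $\mathcal C^k$ and $\mu$-uniform upper/lower bounds on the density of $\Pi^\epsilon[\mu]$. Once the Poincar\'e constant and the parabolic smoothing constant are identified as $\mu$-independent, the rest is bookkeeping of short-time integrability of $t^{-1/2}$ and long-time integrability of $e^{-\lambda_\epsilon t}$.
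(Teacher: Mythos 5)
Your proposal is correct and follows essentially the same strategy as the paper's proof: deduce $\mu$-uniform functional inequalities for $\Pi^\epsilon[\mu]$ by bounded perturbation from the Lebesgue measure (this is exactly the Holley--Stroock argument you cite, and the one the paper invokes via \cite[Proposition~5.1.6]{BakryGentilLedoux}), obtain $L^2$-exponential decay from the resulting Poincar\'e inequality, use ultracontractivity to pass from $L^2$ to $L^\infty$, obtain a $t^{-1/2}$ short-time gradient-smoothing bound, split the defining integral of $Q_\mu^\epsilon$ at a fixed time, and finish with the Poisson identity $\mathcal L_\mu^\epsilon Q_\mu^\epsilon=-K_\mu^\epsilon$ by integrating $\partial_t P_t^{\epsilon,\mu}=\mathcal L_\mu^\epsilon P_t^{\epsilon,\mu}$. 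The one point where your route genuinely differs is the source of the gradient bound $\|\nabla P_t^{\epsilon,\mu}g\|_{\mathcal C^0}\le C_\epsilon\,t^{-1/2}\|g\|_{\mathcal C^0}$: you invoke classical parabolic Schauder/heat-kernel smoothing for operators with coefficients $\mathcal C^k$-bounded uniformly in $\mu$, whereas the paper derives it from the Bakry--\'Emery $\Gamma_2$ criterion, verifying the curvature lower bound $\Gamma_2^{\epsilon,\mu}(f)\geq -c_\epsilon\,\Gamma^{\epsilon,\mu}(f)$ directly from the $\mu$-uniform $\mathcal C^2$ bound on $V_\mu^\epsilon$ given by Lemma~\ref{lem:bounds-F_A}. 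Both are legitimate and yield a $\mu$-independent constant; the $\Gamma_2$ route has the advantage of giving the constant explicitly in terms of $\|\nabla^2 V_\mu^\epsilon\|_\infty$ without appealing to an external parabolic-regularity theorem, while the Schauder route is more off-the-shelf if one is willing to track the uniformity of the Schauder constants in the coefficients, which you correctly flag as the point that must be checked.
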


\begin{proof}
Remark that, from Lemma \ref{lem:bounds-F_A}, $V_\mu^\varepsilon\in\mathcal C^\infty(\T^d)$, from which it is classical to see that $P_t^{\epsilon,\mu} f\in\mathcal C^\infty(\T^d)$ for all $f\in \mathcal{C}^\infty(\T^d)$. In particular, $\mathcal{C}^\infty(\T^d)$ is a core for $\mathcal L^{\epsilon,\mu}$, see \cite[Section 3.2]{BakryGentilLedoux} and thus it is enough to prove the result for $f\in\mathcal C^\infty(\T^d)$.

As a first step, for all $\epsilon \in (0,1]$ there exists $R_\epsilon>0$ such that for all $\mu\in \mathcal{P}(\T^d)$, $\Pi^\epsilon[\mu]$ satisfies a log-Sobolev inequality and a Sobolev inequality both with constant $R_\epsilon$, in the sense that for all positive $f\in\mathcal C^\infty(\T^d)$,
\begin{eqnarray*}
\int_{\mathbb T^d} f \ln f  d \Pi^\epsilon[\mu]- \int_{\mathbb T^d}  f d \Pi^\epsilon[\mu] \ln \int_{\mathbb T^d}  f  d \Pi^\epsilon[\mu]  & \leqslant & R_\epsilon \int_{\mathbb T^d}  \frac{|\nabla f|^2}{f} d \Pi^\epsilon[\mu]\\
\| f\|_{L^p(\Pi^\epsilon[\mu])}^2 & \leqslant & R_\epsilon  \| f\|_{H^1(\Pi^\epsilon[\mu])}^2\,,
\end{eqnarray*}
where $p= \frac{2d}{d-2}$. Indeed, from Lemma \ref{lem:bounds-F_A}, the density of $\Pi^\epsilon[\mu]$ with respect to the Lebesgue measure is bounded above and below away from zero uniformly in $\mu\in\mathcal P(\T^d)$. The inequalities are then obtained by  a perturbative argument from those satisfied by the Lebesgue measure, see \cite[Proposition 5.1.6]{BakryGentilLedoux}).

As a second step, these inequalities imply the following estimates: for all $\epsilon \in (0,1]$ there exists $R_\epsilon'>0$ such that for all $\mathcal{P}(\T^d)$, $f\in\mathcal C^{\infty}(\T^d)$ and $t\geq 0$, 
\begin{eqnarray*}
\| P_t^{\epsilon,\mu} K_\mu^\epsilon f \|_{L^2(\Pi^\varepsilon[\mu])} & \leqslant & e^{-R_\epsilon t/2} \|  K_\mu^\epsilon f \|_{L^2(\Pi^\varepsilon[\mu])}\\
\| P_t^{\epsilon,\mu} f \|_{\infty} & \leqslant & \frac{R_\epsilon'}{\max(1, t^{d/2})} \|   f \|_{L^2(\Pi^\varepsilon[\mu])}\\
\| \nabla P_t^{\epsilon,\mu} f \|_{\infty} & \leqslant & \frac{R_\epsilon'}{\max(1,\sqrt t)} \|   f \|_{\infty}.
\end{eqnarray*}
Indeed, the first estimate is a usual consequence of the Poincar\'e inequality, implied by the log-Sobolev one (see \cite[Theorem 4.2.5 and Proposition 5.1.3]{BakryGentilLedoux}). The second one, namely the ultracontractivity of the semi-group, is a consequence of the Sobolev inequality (see \cite[Theorem 6.3.1]{BakryGentilLedoux}). The last one can be established thanks to the Bakry-Emery calculus (see \cite[Section 1.16]{BakryGentilLedoux} for an introduction), by showing that $\mathcal{L}_\mu^\epsilon$ satisfies a curvature estimate. More precisely, denote
\begin{eqnarray*}
\Gamma^{\epsilon,\mu}(f,g) & = & \frac12 \left ( \mathcal{L}_\mu^\epsilon(fg) - f \mathcal{L}_\mu^\epsilon g - g \mathcal{L}_\mu^\epsilon  f\right)\\ 
\Gamma_2^{\epsilon,\mu}(f) & = & \frac12 \Gamma^{\epsilon,\mu}(f) - \Gamma^{\epsilon,\mu}(f, \mathcal{L}_\mu^\epsilon f ),
\end{eqnarray*}
with $\Gamma^{\epsilon,\mu}(f) := \Gamma^{\epsilon,\mu}(f,f)$. Straightforward computations yield
\begin{eqnarray*}
\Gamma^{\epsilon,\mu}(f) & = &  |\nabla f|^2 \\ 
\Gamma_2^{\epsilon,\mu} (f) & \geqslant & -   |\nabla^2 V_\mu^\epsilon|   |\nabla f|^2 \ \geqslant \ -c_\epsilon \Gamma^{\epsilon,\mu}(f)
\end{eqnarray*}
for some $c_\epsilon>0$ which does not depend on $\mu\in\mathcal P(\T^d)$ thanks to Lemma \ref{lem:bounds-F_A}. According to  \cite[Theorem 4.7.2]{BakryGentilLedoux}, this implies that 
\[\Gamma^{\epsilon,\mu}(P_t^{\epsilon,\mu} f) \ \leqslant \  \left( \frac{1-e^{-c'_\epsilon t}}{c'_\epsilon}\right)^{-1} P_f^{\epsilon,\mu} f^2 \ \leqslant \  \left( \frac{1-e^{-c'_\epsilon t}}{c'_\epsilon}\right)^{-1}\| f\|_\infty^2, \]
which concludes the proof of the third estimate.

As a third step, we bound (using that $\|P_t^{\epsilon,\mu}f\|_\infty \leqslant \|f\|_\infty$ for all $t\geqslant 0$)
\begin{eqnarray*}
  \int_0^\infty \| P_t^{\epsilon,\mu}K_\mu^\epsilon f\|_\infty dt & \leqslant &   \int_0^1 \|K_\mu^\epsilon f\|_\infty dt +   \int_1^\infty \| P_t^{\epsilon,\mu}K_\mu^\epsilon f\|_\infty dt \\
  & \leqslant & 2\|f\|_\infty +    R_\epsilon' \int_1^\infty \| P_{t-1}^{\epsilon,\mu}K_\mu^\epsilon f\|_{L^2(\Pi^\epsilon[\mu])} dt \\
    & \leqslant & 2\|f\|_\infty +    R_\epsilon' \int_0^\infty  e^{-R_\epsilon s/2} \|  K_\mu^\epsilon f\|_{L^2(\Pi^\epsilon[\mu])} dt \\
    & \leqslant & \left(2 + \frac{4R_\epsilon'}{R_\epsilon}\right)\|f\|_\infty\,,
\end{eqnarray*} 
and similarly
\begin{eqnarray*}
  \int_0^\infty \| \nabla P_t^{\epsilon,\mu}K_\mu^\epsilon f\|_\infty dt & \leqslant &   \int_0^2 \frac{R_\epsilon'}{\max(1,\sqrt t)}   \|   K_\mu^\epsilon f\|_\infty dt + R_\epsilon'  \int_2^\infty    \|   P_{t-1}^{\epsilon,\mu}K_\mu^\epsilon f\|_\infty dt   \\  
  & \leqslant & 6 R_\epsilon' \|f\|_\infty  +    R_\epsilon'^2 \int_0^\infty  e^{-R_\epsilon s/2} \|  K_\mu^\epsilon f\|_{L^2(\Pi^\epsilon[\mu])} dt \\
    & \leqslant & \left(6 R_\epsilon' + \frac{4R_\epsilon'^2}{R_\epsilon}\right)\|f\|_\infty\,,
\end{eqnarray*} 
from which $Q_\mu^\epsilon f$ is well defined for $f\in\mathcal C^\infty(\T^d)$ and satisfies \eqref{eq:borneQmu} for some $C_\epsilon$. Finally,
\begin{eqnarray*}
\mathcal{L}_\mu^\epsilon Q_\mu^\epsilon f &  = & \int_0^\infty \mathcal{L}_\mu^\epsilon P_t^{\epsilon,\mu} K_\mu^\epsilon f dt \\
& = & \int_0^\infty \partial_t\left( P_t^{\epsilon,\mu} K_\mu^\epsilon f\right) dt  \ = \ - K_\mu^\epsilon f\,.
\end{eqnarray*}
\end{proof}

\begin{proof}[Proof of Theorem \ref{theo:APT}]
First, note that the claim is equivalent to the following statement (see~\cite[Proposition~3.5]{BLR02}):
\[
\underset{s\in[0,T]}\sup~|\varepsilon_t(s)f|\underset{t\to\infty}\to 0,
\]
for all $f\in\mathcal{S}$ and $T\in\mathbb{Q}^+$, where
\[
\varepsilon_t(s)=\int_{e^{t}}^{e^{t+s}}\frac{\delta_{X_\tau}-\Pi^\epsilon[\mu_\tau]}{\tau}d\tau.
\]

Using a Borel-Cantelli argument, and the fact that $\mathcal{S}$ is a countable set, it is sufficient to establish that there exists $C_\epsilon\in(0,\infty)$, such that
\[
\E\bigl[\underset{s\in[0,T]}\sup~|\varepsilon_t(s)f|^2\bigr]\le C_\epsilon e^{-t}\|f\|_{\mathcal{C}^0}^2,
\]
for all $t\ge 0$ and $f\in\mathcal{S}$.

Let $f\in\mathcal{S}$ and introduce the function $F:(0,\infty)\times \T^d\to\R$ defined by $F(t,x)=t^{-1}Q_{\mu_t}^\epsilon f$. Then $F$ is of class $\mathcal{C}^{1,2}$ on $(0,\infty)\times \T^d$. Indeed, first, it is straightforward to check that $t\mapsto F^\epsilon[\mu_t]\in \mathcal{C}^k(\T^d,\R^m)$ is of class $\mathcal{C}^1$, for all $k\in\N_0$, since $t\mapsto \mu_t\in\mathcal{P}(\T^d)$ (equipped with the Wasserstein distance) is of class $\mathcal{C}^1$. Second, $A^\epsilon[\mu]$ is solution of the Euler-Lagrange equation $\Delta A^\epsilon[\mu]={\rm div}(F^\epsilon[\mu])$, which establishes that $t\mapsto A^\epsilon[\mu_t]\in\mathcal{C}^k(\T^m,\R)$ is also of class $\mathcal{C}^1$. Finally, it remains to apply standard arguments to establish the $\mathcal{C}^1$ regularity of $t\mapsto Q_{\mu_t}^\epsilon f$.

Applying It\^o formula yields, for all $t\ge 0$ and $s\in[0,T]$, the equality
\begin{align*}
F(e^{t+s},X_{e^{t+s}})&=F(e^t,X_{e^t})+\int_{e^t}^{e^{t+s}}\bigl(\partial_\tau+\mathcal{L}_{\mu_\tau}^\epsilon\bigr)F(\tau,X_\tau)d\tau+\sqrt{2}\int_{e^t}^{e^{t+s}}\langle \nabla F(\tau,X_\tau),dW(\tau)\rangle. 
\end{align*}
Observing that $\mathcal{L}_{\mu_\tau}^\epsilon F(\tau,X_\tau)=\tau^{-1}\mathcal{L}_{\mu_\tau}^\epsilon Q_{\mu_\tau}^\epsilon(X_\tau) f=-\tau^{-1}\bigl(f(X_\tau)-\int fd\Pi^\epsilon[\mu_\tau]\bigr)$, one obtains
\[
\varepsilon_t(s)f=\varepsilon_t^1(s)f+\varepsilon_t^2(s)f+\varepsilon_t^3(s)f+\varepsilon_t^4(s)f,
\]
where
\begin{align*}
\varepsilon_t^1(s)f&=e^{-t}\Bigl(Q_{\mu_t}^\epsilon f-e^{-s}Q_{\mu_{t+s}}^\epsilon f\Bigr),\\
\varepsilon_t^2(s)f&=\int_{e^{t}}^{e^{t+s}}-\tau^{-2}Q_{\mu_t\tau}^\epsilon f(X_\tau)d\tau,\\
\varepsilon_t^3(s)f&=\int_{e^t}^{e^{t+s}}\tau^{-1}\frac{d}{d\tau}Q_{\mu_\tau}^\epsilon f(X_\tau)d\tau,\\
\varepsilon_t^4(s)f&=\sqrt{2}\int_{e^{t}}^{e^{t+s}} \tau^{-1}\langle \nabla Q_{\mu_\tau}^{\epsilon}f(X_\tau),dW(\tau)\rangle.
\end{align*}

First, it is straightforward to check that the error terms $\varepsilon_t^1(s)f$ and $\varepsilon_t^2(s)f$ are upper estimated as follows: almost surely,
\[
\underset{0\le s\le T}\sup~|\varepsilon_t^1(s)f|+\underset{0\le s\le T}\sup~|\varepsilon_t^2(s)f|\le C_\epsilon e^{-t}\|f\|_\infty.
\]
To treat the error term $\varepsilon_t^3(s)f$, it suffices to upper estimate the Lipschitz constant of $t\mapsto Q_{\mu_t}^\epsilon f$. Let $t_1,t_2\in(0,\infty)$, then one has
\begin{align*}
K_{\mu_{t_1}}^\epsilon f-K_{\mu_{t_2}}^\epsilon f&=\mathcal{L}_{\mu_{t_2}}^\epsilon Q_{\mu_{t_2}}^\epsilon f-\mathcal{L}_{\mu_{t_1}}^\epsilon Q_{\mu_{t_1}}^\epsilon f\\
&=\mathcal{L}_{\mu_{t_1}}^\epsilon\Bigl(Q_{\mu_{t_2}}^\epsilon f-Q_{\mu_{t_1}}^{\epsilon}f\Bigr)+\Bigl(\mathcal{L}_{\mu_{t_2}}^\epsilon-\mathcal{L}_{\mu_{t_1}}^\epsilon\Bigr)Q_{\mu_{t_2}}^\epsilon f,
\end{align*}
thus one obtains
\[
Q_{\mu_{t_2}}^\epsilon f-Q_{\mu_{t_1}}^{\epsilon}f=Q_{\mu_{t_1}}^\epsilon \delta_{t_1,t_2}^\epsilon f,
\]
where the auxiliary function $\delta_{t_1,t_2}^\epsilon f$ is defined as
\[
\delta_{t_1,t_2}^\epsilon f=K_{\mu_{t_1}}^\epsilon f-K_{\mu_{t_2}}^\epsilon f-\Bigl(\mathcal{L}_{\mu_{t_2}}^\epsilon-\mathcal{L}_{\mu_{t_1}}^\epsilon\Bigr)Q_{\mu_{t_2}}^\epsilon f,
\]
and satisfies the centering condition $\int \delta_{t_1,t_2}^\epsilon f d\Pi^{\epsilon}[\mu_{t_1}]=\int \mathcal{L}_{\mu_{t_1}}^\epsilon\Bigl(Q_{\mu_{t_2}}^\epsilon f-Q_{\mu_{t_1}}^{\epsilon}f\Bigr) d\Pi^{\epsilon}[\mu_{t_1}]=0$.

One has the estimate
\[
\|Q_{\mu_{t_2}}^\epsilon f-Q_{\mu_{t_1}}^{\epsilon}f\|_\infty\le C_\epsilon \|\delta_{t_1,t_2}^\epsilon f\|_\infty.
\]
On the one hand, one has
\begin{align*}
\|K_{\mu_{t_1}}^\epsilon f-K_{\mu_{t_2}}^\epsilon f\|_\infty&=\big|\int fd\Pi^\epsilon[\mu_{t_1}]-\int fd\Pi^\epsilon[\mu_{t_2}]\big|\\
&\le \|f\|_\infty d_{\rm TV}(\Pi^\epsilon[\mu_{t_1}],\Pi^\epsilon[\mu_{t_2}])\\
&\le L(\epsilon)\|f\|_\infty d_{\rm TV}(\mu_{t_1},\mu_{t_2}),
\end{align*}
owing to Lemma~\ref{lem:Lip_PiEpsilon}.

On the other hand, one has
\begin{align*}
\|\bigl(\mathcal{L}_{\mu_{t_2}}^\epsilon-\mathcal{L}_{\mu_{t_1}}^\epsilon\bigr)Q_{\mu_{t_2}}^\epsilon f\|_\infty&= \|\langle \nabla A^\epsilon[\mu_{t_2}]-\nabla A^\epsilon[\mu_{t_1}],\nabla_z Q_{\mu_{t_2}}^\epsilon f\rangle\|_\infty\\
&\le \|A^\epsilon[\mu_{t_2}]-A^\epsilon[\mu_{t_1}]\|_{\mathcal{C}^1}\|Q_{\mu_{t_2}}^\epsilon f\|_{\mathcal{C}^1}\\
&\le L_{1,\epsilon}C_\epsilon \|f\|_\infty d_{\rm TV}(\mu_{t_1},\mu_{t_2}).
\end{align*}
Finally, it is straightforward to check that
\[
d_{\rm TV}(\mu_{t_1},\mu_{t_2})\le \frac{2|t_2-t_1|}{t_1\wedge t_2},
\]
using the identity $\dot{\mu}_t=\frac{1}{t+r}(\delta_{X_t}-\mu_t)$.

As a consequence, one obtains
\begin{align*}
\underset{0\le s\le T}\sup~|\varepsilon_t^3(s)f|&\le \int_{e^{t}}^{e^{t+T}}\tau^{-1}|\frac{d}{d\tau}Q_{\mu_\tau}^\epsilon f(X_\tau)|d\tau\\
&\le C_\epsilon\int_{e^{t}}^{e^{t+T}}\tau^{-2}d\tau \|f\|_\infty\\
&\le C_\epsilon e^{-t}\|f\|_\infty.
\end{align*}

It remains to deal with the error term $\varepsilon_t^4(s)f$. Using Doob inequality implies
\begin{align*}
\E\bigl[\underset{0\le s\le T}\sup~|\varepsilon_t^4(s)f|^2\bigr]&\le C\int_{e^{t}}^{e^{t+T}}\tau^{-2}\E\bigl[|\nabla Q_{\mu_\tau}^\epsilon f(X_\tau)|^2\bigr] d\tau\\
&\le C_\epsilon e^{-t}\|f\|_\infty^2.
\end{align*}

This concludes the proof of the claim,
\[
\E\bigl[\underset{s\in[0,T]}\sup~|\varepsilon_t(s)f|^2\bigr]\le C_\epsilon e^{-t}\|f\|_{\mathcal{C}^0}^2,
\]
for all $t\ge 0$ and $f\in\mathcal{S}$.

Applying a Borel-Cantelli argument then concludes the proof.
\end{proof}

\section{Proof of Theorem~\ref{theo:Main}}\label{sec:proof-main}

The objective of this section is to give a detailed proof of Theorem~\ref{theo:Main}. There are two main ingredients. The first one is Proposition~\ref{propo:bound} below, which provides a uniform estimate over $\epsilon>0$ for $A^\epsilon[\mu]$, in the $\mathcal{C}^0$ norm (compare with Lemma~\ref{lem:bounds-F_A} where the upper bound may depend on $\epsilon$). The second key ingredient is Proposition~\ref{propo:contractionPitilde}, which states a contraction property for the mapping $\Pi^\epsilon$, for an appropriate distance, for sufficiently small $\epsilon$, when restricted to an attracting set identified below (compare with Lemma~\ref{lem:Lip_PiEpsilon} which is valid on the entire state space, but where no upper bound for $L(\epsilon)$ holds).

Combining these two ingredients provides a candidate for the limit as $t\to\infty$, using a standard Picard iteration argument. Using Theorem~\ref{theo:APT} (asymptotic pseudo-trajectory property) then proves the almost sure convergence of $\mu_t$ to this candidate limit.

\subsection{Uniform estimate}

The following PDE estimate is crucial for the analysis.
\begin{propo}\label{propo:PDE}
Let $m\in\N$. For every $p\in[2,\infty)$, there exists $C_p\in(0,\infty)$, such that the following holds: let $F:\T^m\to\R^m$ be a continuous function, then the solution $A$ of the elliptic PDE $\Delta A={\rm div}(F)$, with the condition $\int A(z)dz=0$, satisfies
\[
\|A\|_{W^{1,p}(\T^m,\R)}\le C_p\|F\|_{\mathcal{C}^0(\T^m,\R^m)},
\]
and if $p>m$, then 
\[
\|A\|_{\mathcal{C}^0(\T^m,\R)}\le C_p\|F\|_{\mathcal{C}^0(\T^m,\R^m)}.
\]
\end{propo}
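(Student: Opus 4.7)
The plan is to reduce the statement to the classical $L^p$-boundedness of Calder\'on--Zygmund operators (equivalently, the Riesz transform) on the torus, followed by the Poincar\'e inequality and the Sobolev embedding.

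First, I would expand in Fourier series. Writing $A(z)=\sum_{k\in\Z^m} \hat A(k) e^{2\pi i k\cdot z}$ and $F(z)=\sum_{k\in\Z^m} \hat F(k) e^{2\pi i k\cdot z}$, the PDE $\Delta A = {\rm div}(F)$ together with the normalization $\int_{\T^m} A\,dz=0$ forces $\hat A(0)=0$ and, for $k\neq 0$,
\[
\hat A(k) \;=\; -\,\frac{i\,k\cdot \hat F(k)}{2\pi |k|^2}\,,
\qquad
\widehat{\nabla A}(k) \;=\; -\,\frac{k\,(k\cdot \hat F(k))}{|k|^2}.
\]
Thus $\nabla A = T F$, where $T$ is the Fourier multiplier with symbol $m(k)= -k k^\top/|k|^2$ (the matrix-valued analogue of a composition of Riesz transforms), extended by $0$ at $k=0$.

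Second, I would invoke the standard Calder\'on--Zygmund theory on the torus (or equivalently on $\R^m$ via a periodic lift): operators of this exact form are bounded on $L^p(\T^m,\R^m)$ for every $p\in(1,\infty)$. Hence there exists $\tilde C_p$ such that $\|\nabla A\|_{L^p(\T^m)}\le \tilde C_p\|F\|_{L^p(\T^m)}$. Since $\T^m$ has finite (normalized) Lebesgue measure, one has $\|F\|_{L^p}\le \|F\|_{\mathcal C^0}$ for every $p\ge 2$, yielding
\[
\|\nabla A\|_{L^p(\T^m)} \;\le\; \tilde C_p \|F\|_{\mathcal C^0(\T^m,\R^m)}.
\]
Combined with the Poincar\'e--Wirtinger inequality on $\T^m$ applied to $A$ (which has zero mean), this gives $\|A\|_{L^p}\lesssim \|\nabla A\|_{L^p}$ and therefore the $W^{1,p}$ bound.

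Third, for the $\mathcal C^0$ bound when $p>m$, I would simply invoke the Sobolev embedding $W^{1,p}(\T^m)\hookrightarrow \mathcal C^0(\T^m)$ and apply it to the $W^{1,p}$ estimate just obtained. The main (only) non-trivial ingredient is the $L^p$ boundedness of the singular integral operator $T$; everything else is routine. Note that the endpoint $p=\infty$ is excluded precisely because $T$ fails to map $L^\infty$ to $L^\infty$, which is consistent with the remark preceding this proposition that higher regularity cannot be expected uniformly in the regularization parameter.
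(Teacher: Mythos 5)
Your proposal is correct and follows essentially the same three-step plan as the paper's proof (Sobolev embedding for $p>m$, Poincar\'e inequality using the zero-mean condition, and $L^p$ elliptic regularity giving $\|\nabla A\|_{L^p}\le C_p\|F\|_{L^p}\le C_p\|F\|_{\mathcal C^0}$); the paper simply cites a textbook elliptic-regularity theorem for the last step, whereas you unpack it explicitly as the $L^p$-boundedness of the Riesz-transform multiplier, which is the same fact.
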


\begin{proof}
The proof combines three arguments.
\begin{itemize}
\item If $p>m$, then by Sobolev embedding properties, one has $\|A\|_{\mathcal{C}^{0}(\T^m,\R)}\le C_p\|A\|_{W^{1,p}(\T^m,\R)}$, with $C_p\in(0,\infty)$.
\item By the Poincar\'e inequality (using the condition $\int A(z)dz=0$, one has $\|A\|_{W^{1,p}(\T^m,\R)}\le C_p\|\nabla A\|_{L^{p}(\T^m,\R^m)}$, with $C_p\in(0,\infty)$, see~\cite[Theorem~1.13]{Ambrosio}.
\item By elliptic regularity theory, one has $\|\nabla A\|_{L^{p}(\T^m,\R^m)}\le C_p\|F\|_{L^{p}(\T^m,\R^m)}\le C_p\|F\|_{\mathcal{C}^0(\T^m,\R^m)}$, with $C_p\in(0,\infty)$, see~\cite[Theorem~15.12]{Ambrosio}.
\end{itemize}
\end{proof}

\begin{rem}
If $m=1$, the proof is straighforward: indeed for all $z\in\T$, one has the identity $A(z)=\int_0^zF(z')dz'-z\int_0^1F(z')dz'$.
\end{rem}

Using Proposition~\ref{propo:PDE}, one gets the following crucial estimate, which is uniform for $\epsilon>0$ (contrary to those given in Lemmas~\ref{lem:bounds-F_A},\ref{lem:Lip-F_A} and \ref{lem:Lip_PiEpsilon} above).

\begin{propo}\label{propo:bound}
One has the following estimate:
\[
M^0=\underset{\epsilon>0}\sup~\underset{\mu\in\mathcal{P}(\T^d)}\sup~\|A^\epsilon[\mu]\|_{\mathcal{C}^0(\T^m,\R)}<\infty.
\]
\end{propo}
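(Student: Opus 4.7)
The strategy is to combine a simple pointwise bound on $F^\epsilon[\mu]$ (uniform in $\epsilon$ and $\mu$) with the uniform elliptic regularity estimate provided by Proposition~\ref{propo:PDE}.

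The first step is to observe that for every fixed $z^\star \in \T^m$, the quantity $F^\epsilon[\mu](z^\star)$ is a weighted average of the values $\nabla_z V(y,z)$. Indeed, the positive measure
\[
d\nu_{z^\star}^\epsilon(y,z) = \frac{K_\epsilon(z, z^\star)}{\iint K_\epsilon(z', z^\star) d\mu(y',z')} d\mu(y,z)
\]
is a probability measure on $\T^d$ (well-defined since $K_\epsilon > 0$ on the compact $\T^m \times \T^m$, so the denominator is strictly positive), and
\[
F^\epsilon[\mu](z^\star) = \iint \nabla_z V(y,z) \, d\nu_{z^\star}^\epsilon(y,z).
\]
Since $V \in \mathcal{C}^\infty(\T^d)$ and $\T^d$ is compact, $\|\nabla_z V\|_{\mathcal{C}^0(\T^d, \R^m)} < \infty$, and therefore
\[
\sup_{\epsilon \in (0,1]} \sup_{\mu \in \mathcal{P}(\T^d)} \|F^\epsilon[\mu]\|_{\mathcal{C}^0(\T^m,\R^m)} \le \|\nabla_z V\|_{\mathcal{C}^0(\T^d,\R^m)}.
\]
This bound is the crucial gain compared with Lemma~\ref{lem:bounds-F_A}: the potentially large constants $M_\epsilon^{(k)}$ appearing there come from differentiating $K_\epsilon$, whereas the sup-norm estimate only requires the positivity of the kernel and cancels between numerator and denominator.

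The second step is to recall that, as noted just before Proposition~\ref{propo:wellposed}, $A^\epsilon[\mu]$ solves the Euler--Lagrange equation
\[
\Delta A^\epsilon[\mu] = \operatorname{div}\bigl(F^\epsilon[\mu]\bigr),
\]
with zero mean on $\T^m$. Choose any $p \in (m, \infty)$ (for instance $p = m+1$) and apply Proposition~\ref{propo:PDE} with this $p$: one obtains
\[
\|A^\epsilon[\mu]\|_{\mathcal{C}^0(\T^m,\R)} \le C_p \|F^\epsilon[\mu]\|_{\mathcal{C}^0(\T^m,\R^m)} \le C_p \|\nabla_z V\|_{\mathcal{C}^0(\T^d,\R^m)},
\]
where $C_p$ depends only on $p$ and $m$, not on $\epsilon$ or $\mu$. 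Taking the supremum over $\epsilon \in (0,1]$ and $\mu \in \mathcal{P}(\T^d)$ yields $M^0 < \infty$, as claimed.

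I do not anticipate a serious obstacle: both ingredients are already in place. The only subtle point is that the uniform $\mathcal{C}^0$ bound on $F^\epsilon[\mu]$ really does come for free from positivity of $K_\epsilon$, and that the PDE estimate of Proposition~\ref{propo:PDE} is stated precisely in the $\mathcal{C}^0$-to-$\mathcal{C}^0$ form needed (via the Sobolev embedding $W^{1,p} \hookrightarrow \mathcal{C}^0$ for $p > m$), with a constant depending only on $p$ and $m$.
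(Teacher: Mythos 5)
Your proposal is correct and matches the paper's proof exactly: both reduce to the uniform $\mathcal{C}^0$ bound on $F^\epsilon[\mu]$ (via the weighted-average observation, using positivity of $K_\epsilon$ and boundedness of $\nabla_z V$) and then invoke Proposition~\ref{propo:PDE} with $p>m$. Your version simply spells out in more detail what the paper leaves as "a straightforward consequence."
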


\begin{proof}
Using Proposition~\ref{propo:PDE} above, it suffices to check that
\[
\underset{\epsilon>0}\sup~\underset{\mu\in\mathcal{P}(\T^d)}\sup~\|F^\epsilon[\mu]\|_{\mathcal{C}^0(\T^m,\R^m)}<\infty.
\]
That estimate is a straightforward consequence of the definition~\ref{eq:def-Fepsilon}, of the boundedness of $\nabla_zV$, and of the positivity of the kernel function $K_\epsilon$. 
\end{proof}

\subsection{Attracting set}

Introduce the following notation: for all $B\in\mathcal{C}(\T^m,\R)$, let
\[
d\mu_B(y,z)=\mathcal{Z}_B^{-1}e^{-V(y,z)+B(z)}dydz\in\mathcal{P}(\T^d),
\]
with $\mathcal{Z}_B=\iint e^{-V(y,z)+B(z)}dydz=\int e^{-A_\star(z)+B(z)}dz$.

First, for probability distribution of the form $\mu_B$, one has the following useful identity for $F^\epsilon[\mu_B]$.
\begin{lemma}\label{lem:identityF}
For every $B\in\mathcal{C}(\T^m,\R)$, one has
\[
F^\epsilon[\mu_B]=\frac{\int \nabla A_\star(z)K_\epsilon(z,\cdot)e^{B(z)-A_\star(z)}dz}{\int K_\epsilon(z,\cdot)e^{B(z)-A_\star(z)}dz}.
\]
\end{lemma}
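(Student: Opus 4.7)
The proof is a direct substitution and reduction, so my plan is to carry out the computation step by step, pointing out where the definition of $A_\star$ enters. No serious obstacle is expected.

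First I would plug the explicit density of $\mu_B$ into the definition~\eqref{eq:def-Fepsilon} of $F^\epsilon[\mu_B]$. Both numerator and denominator then contain the factor $\mathcal{Z}_B^{-1}$, which cancels. By Fubini, the denominator becomes
\[
\int_{\T^m} K_\epsilon(z,\cdot)\,e^{B(z)} \Bigl(\int_{\T^{d-m}} e^{-V(y,z)}\,dy\Bigr)dz,
\]
and by the very definition~\eqref{eq:FE} of the free energy, $\int_{\T^{d-m}} e^{-V(y,z)}\,dy = e^{-A_\star(z)}$. This immediately produces the denominator claimed in the lemma.

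Next I would treat the numerator, which after applying Fubini is
\[
\int_{\T^m} K_\epsilon(z,\cdot)\,e^{B(z)} \Bigl(\int_{\T^{d-m}} \nabla_z V(y,z)\,e^{-V(y,z)}\,dy\Bigr)dz.
\]
The key observation is the identity
\[
\int_{\T^{d-m}} \nabla_z V(y,z)\,e^{-V(y,z)}\,dy = -\nabla_z \int_{\T^{d-m}} e^{-V(y,z)}\,dy = -\nabla_z e^{-A_\star(z)} = \nabla A_\star(z)\,e^{-A_\star(z)},
\]
which follows from differentiating~\eqref{eq:FE} under the integral sign (legal since $V$ is smooth on the compact torus) and is exactly the mean-force identity~\eqref{eq:meanforce} rewritten. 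Substituting this back yields the numerator in the statement, completing the proof.

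Thus the argument is essentially a one-line computation once the identity $\nabla A_\star(z)\,e^{-A_\star(z)} = \int \nabla_z V(y,z)\,e^{-V(y,z)}\,dy$ is invoked; the main conceptual point, which will be useful downstream, is that for probability distributions of the form $\mu_B$ the integration over the fast variable $y$ can be carried out explicitly, so that $F^\epsilon[\mu_B]$ depends on $B$ only through the one-dimensional-in-spirit weight $e^{B-A_\star}$ on $\T^m$.
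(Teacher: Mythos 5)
Your proof is correct and follows the same route as the paper: substitute the density of $\mu_B$ into~\eqref{eq:def-Fepsilon}, cancel $\mathcal{Z}_B^{-1}$, integrate out $y$ via Fubini, and invoke the two identities $\int e^{-V(y,z)}dy = e^{-A_\star(z)}$ and $\int \nabla_z V(y,z)e^{-V(y,z)}dy = e^{-A_\star(z)}\nabla A_\star(z)$, the latter by differentiating under the integral sign. This matches the paper's proof exactly.
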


\begin{proof}
This is a straightforward consequence of the two identities below: for all $z\in\T^m$,
\begin{align*}
\int e^{-V(y,z)}dy&=e^{-A_\star(z)},\\
\int \nabla_zV(y,z)e^{-V(y,z)}dy&=-\nabla\left(\int e^{-V(y,z)}dy\right)=e^{-A_\star(z)}\nabla A_\star(z).
\end{align*}
\end{proof}

The set of the probability distribution of the type $\mu_B$ is an attractor for the dynamics of the limit flow, more precisely one has the following result.
\begin{propo}\label{propo:attractor}
One has the following result: for all $t\ge 0$,
\[
\underset{\epsilon>0}\sup~\underset{\mu\in\mathcal{P}(\T^d)}\sup~ \underset{B\in\mathcal{C}(\T^m,\R)}\inf~d_{\rm TV}(\Phi^\epsilon(t,\mu),\mu_B)\le 2e^{-t}.
\]
\end{propo}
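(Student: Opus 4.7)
The plan is to exploit the explicit structure of $\Pi^\epsilon[\mu]$ to show that, up to the vanishing factor $e^{-t}\pi$, the flow $\Phi^\epsilon(t,\pi)$ is \emph{exactly} of the form $\mu_B$ for some continuous function $B:\T^m\to\R$.

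First I write the defining integral equation of the flow
\[
\Phi^\epsilon(t,\pi)=e^{-t}\pi+\int_0^t e^{s-t}\Pi^\epsilon[\Phi^\epsilon(s,\pi)]\,ds,
\]
and compute the density of the integral term with respect to $dy\,dz$. Since each $\Pi^\epsilon[\Phi^\epsilon(s,\pi)]$ has density $Z^\epsilon[\Phi^\epsilon(s,\pi)]^{-1}e^{-V(y,z)+A^\epsilon[\Phi^\epsilon(s,\pi)](z)}$, the factor $e^{-V(y,z)}$ can be pulled out of the time integral, so the integral term has a density of the form $e^{-V(y,z)}h(z)$, where
\[
h(z)=\int_0^t\frac{e^{s-t+A^\epsilon[\Phi^\epsilon(s,\pi)](z)}}{Z^\epsilon[\Phi^\epsilon(s,\pi)]}\,ds.
\]
I then set $B(z)=\log h(z)$; positivity of the integrand makes $B$ well-defined, and the continuity of $s\mapsto A^\epsilon[\Phi^\epsilon(s,\pi)]$ (from Lemmas~\ref{lem:bounds-F_A}--\ref{lem:Lip-F_A} together with the continuity of the flow in $d_{\rm TV}$) makes $B$ continuous on $\T^m$.

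Next I compute the total mass $\iint e^{-V(y,z)+B(z)}dy\,dz = \int h(z)e^{-A_\star(z)}dz$. Interchanging the order of integration and recognising $\int e^{-A_\star(z)+A^\epsilon[\Phi^\epsilon(s,\pi)](z)}dz=Z^\epsilon[\Phi^\epsilon(s,\pi)]$ gives $\mathcal{Z}_B=1-e^{-t}$. Hence $\int_0^t e^{s-t}\Pi^\epsilon[\Phi^\epsilon(s,\pi)]ds=(1-e^{-t})\mu_B$, and the flow can be written as the genuine convex combination
\[
\Phi^\epsilon(t,\pi)=e^{-t}\pi+(1-e^{-t})\mu_B.
\]

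Finally, I conclude by a routine total variation computation: writing $\mu_B=e^{-t}\mu_B+(1-e^{-t})\mu_B$ and using linearity of signed measures together with $d_{\rm TV}(\pi,\mu_B)\le 1$, one obtains
\[
d_{\rm TV}\bigl(\Phi^\epsilon(t,\pi),\mu_B\bigr)=e^{-t}\,d_{\rm TV}(\pi,\mu_B)\le e^{-t}\le 2e^{-t},
\]
uniformly in $\epsilon>0$ and $\mu\in\mathcal{P}(\T^d)$, which is even sharper than the claimed bound.

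The only genuinely non-routine point is checking that $B=\log h$ is a legitimate continuous function (so that $\mu_B$ is defined in the class used in the statement); this rests on the strict positivity of $h$ and the joint continuity in $(s,z)$ of $A^\epsilon[\Phi^\epsilon(s,\pi)](z)$, both of which follow from the uniform $\mathcal{C}^k$ bounds of Lemma~\ref{lem:bounds-F_A} and the TV-Lipschitz estimate of Lemma~\ref{lem:Lip-F_A}.
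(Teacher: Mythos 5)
Your proof is correct and follows essentially the same route as the paper's: identify $\Phi^\epsilon(t,\pi)=e^{-t}\pi+(1-e^{-t})\mu_B$ as a convex combination with $\mu_B$ in the attracting set, then read off the total variation bound. You actually give a bit more detail than the paper — explicitly computing the density of the integral term, verifying that the normalization constant equals $1-e^{-t}$, and noting the continuity of $B=\log h$ via Lemmas~\ref{lem:bounds-F_A} and~\ref{lem:Lip-F_A} — whereas the paper simply asserts that the normalized integral $\Psi^\epsilon(t,\mu)$ is of the form $\mu_B$; your version also yields the slightly sharper bound $e^{-t}$ with the paper's factor-$\tfrac12$ convention for $d_{\rm TV}$.
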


\begin{proof}
For all $t\ge 0$ and $\mu\in\mathcal{P}(\T^d)$, one has
\[
\Phi^\epsilon(t,\mu)=e^{t}\mu+\int_{0}^{t}e^{s-t}\Pi^\epsilon[\Phi^\epsilon(s,\mu)]ds=e^{-t}\mu+(1-e^{-t})\Psi^{\epsilon}(t,\mu),
\]
where $\Psi^\epsilon(t,\mu)=\frac{1}{1-e^{-t}}\int_{0}^{t}e^{s-t}\Pi^\epsilon[\Phi^\epsilon(s,\mu)]ds=\mu_B$ for some $B\in \mathcal{C}(\T^m,\R)$, owing to the definition of $\Pi^\epsilon$.

Then
\[
\underset{B\in\mathcal{C}(\T^m,\R)}\inf~d_{\rm TV}(\Phi^\epsilon(t,\mu),\mu_B)\le d_{\rm TV}(\Phi^\epsilon(t,\mu),\Psi^\epsilon(t,\mu))\le e^{-t}\|\mu-\Psi^\epsilon(t,\mu)\|_{\rm TV}\le 2e^{-t}.
\]
\end{proof}

\begin{lemma}\label{lem:distA}
For every $p\in[2,\infty)$, there exists $C_p\in(0,\infty)$, such that for every $\epsilon>0$, and every $B\in \mathcal{C}(\T^m,\R)$, one has
\begin{equation}
\|A^\epsilon[\mu_B]-\bar A_\star\|_{W^{1,p}(\T^m)}\le C_p\sqrt{\epsilon}e^{2\bigl(\|B\|_{\mathcal{C}^0}+\|A_\star\|_{\mathcal{C}^0}\bigr)}.
\end{equation}
\end{lemma}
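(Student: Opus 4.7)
The plan is to reduce the Sobolev estimate to a $\mathcal{C}^0$ estimate on $F^\epsilon[\mu_B]-\nabla A_\star$ via Proposition~\ref{propo:PDE}, and then to exploit the explicit kernel representation of $F^\epsilon[\mu_B]$ given by Lemma~\ref{lem:identityF}.

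First, I would observe that both $A^\epsilon[\mu_B]$ and $\bar A_\star$ have zero mean on $\T^m$ by construction. Since $A^\epsilon[\mu_B]$ solves $\Delta A^\epsilon[\mu_B]=\operatorname{div}(F^\epsilon[\mu_B])$ and trivially $\Delta \bar A_\star=\operatorname{div}(\nabla A_\star)$, the difference $A^\epsilon[\mu_B]-\bar A_\star$ solves the zero-mean Poisson equation with source $\operatorname{div}(F^\epsilon[\mu_B]-\nabla A_\star)$. Proposition~\ref{propo:PDE} then yields, for every $p\in[2,\infty)$,
\[
\|A^\epsilon[\mu_B]-\bar A_\star\|_{W^{1,p}(\T^m)}\le C_p\,\|F^\epsilon[\mu_B]-\nabla A_\star\|_{\mathcal{C}^0(\T^m,\R^m)}.
\]

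The main work is the pointwise $\mathcal{C}^0$ estimate. By Lemma~\ref{lem:identityF}, writing $w(z)=e^{B(z)-A_\star(z)}$, one has
\[
F^\epsilon[\mu_B](z_0)=\frac{\int K_\epsilon(z,z_0)\,\nabla A_\star(z)\,w(z)\,dz}{\int K_\epsilon(z,z_0)\,w(z)\,dz}.
\]
Using that $\int K_\epsilon(z,z_0)dz=1$ by Assumption~\ref{ass:kernel}, I can express $\nabla A_\star(z_0)$ as a $K_\epsilon(z,z_0)w(z)$-weighted average of the constant $\nabla A_\star(z_0)$, so that
\[
F^\epsilon[\mu_B](z_0)-\nabla A_\star(z_0)=\frac{\int K_\epsilon(z,z_0)\bigl(\nabla A_\star(z)-\nabla A_\star(z_0)\bigr)w(z)\,dz}{\int K_\epsilon(z,z_0)\,w(z)\,dz}.
\]
The denominator is bounded below by $e^{-(\|B\|_{\mathcal{C}^0}+\|A_\star\|_{\mathcal{C}^0})}$ (again using $\int K_\epsilon(\cdot,z_0)dz=1$), while the numerator is controlled using the Lipschitz continuity of $\nabla A_\star$ (which is smooth on the compact $\T^m$) and the uniform upper bound $w(z)\le e^{\|B\|_{\mathcal{C}^0}+\|A_\star\|_{\mathcal{C}^0}}$. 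This leaves an integral of the form $\int K_\epsilon(z,z_0)|z-z_0|dz$, which by Cauchy--Schwarz is bounded by $\bigl(\int K_\epsilon(z,z_0)dz\bigr)^{1/2}\bigl(\int K_\epsilon(z,z_0)|z-z_0|^2dz\bigr)^{1/2}\le\sqrt{c_K\epsilon}$, invoking the last estimate of Assumption~\ref{ass:kernel}.

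Combining these ingredients gives $\|F^\epsilon[\mu_B]-\nabla A_\star\|_{\mathcal{C}^0}\le C\sqrt{\epsilon}\,e^{2(\|B\|_{\mathcal{C}^0}+\|A_\star\|_{\mathcal{C}^0})}$, and the conclusion follows from the Poisson estimate above. No step is genuinely hard here; the only thing to get right is to rewrite the difference $F^\epsilon[\mu_B]-\nabla A_\star$ as a kernel-averaged increment of $\nabla A_\star$ so that the $\sqrt{\epsilon}$ rate emerges from the second-moment bound in Assumption~\ref{ass:kernel}, rather than from a naive first-moment bound which would not be sharp enough.
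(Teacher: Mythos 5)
Your proof is correct and follows essentially the same route as the paper: reduce to a $\mathcal{C}^0$ bound on $F^\epsilon[\mu_B]-\nabla A_\star$ via Proposition~\ref{propo:PDE}, rewrite the difference using Lemma~\ref{lem:identityF} as a kernel-weighted increment of $\nabla A_\star$, and extract the $\sqrt{\epsilon}$ rate from the second-moment bound in Assumption~\ref{ass:kernel}. Your remarks about the zero-mean normalization and the explicit Cauchy--Schwarz step simply make explicit two points the paper leaves implicit.
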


Recall that $\bar A_\star=A_\star - \int_{\T^m} A_\star dz$.

\begin{proof}
Using Proposition~\ref{propo:PDE}, one has the following inequality:
\[
\|A^\epsilon[\mu_B]-\bar A_\star\|_{W^{1,p}(\T^m,\R)}\le C_p\|F^\epsilon[\mu_B]-\nabla A_\star\|_{C^0(\T^m,\R^m)}.
\]
Owing to Lemma~\ref{lem:identityF} and using the Lipschitz continuity of $A_\star$, for all $z\in\T^m$, one has
\begin{align*}
\big|F^\epsilon[\mu_B](z)-\nabla A_\star(z)\big|&\le \Big|\frac{\int \bigl(\nabla A_\star(z')-\nabla A_\star(z)\bigr)K_\epsilon(z',z)e^{B(z')-A_\star(z')}dz'}{\int K_\epsilon(z',z)e^{B(z')-A_\star(z')}dz'}\Big|\\
&\le C\frac{\int |z-z'|K_\epsilon(z',z)dz' e^{\|B\|_{\mathcal{C}^0}+\|A_\star\|_{\mathcal{C}^0}}}{\int K_\epsilon(z',z)dz' e^{-\|B\|_{\mathcal{C}^0}-\|A_\star\|_{\mathcal{C}^0}}}\\
&\le C\sqrt{\epsilon}e^{2(\|B\|_{\mathcal{C}^0}+\|A_\star\|_{\mathcal{C}^0})},
\end{align*}
owing to Assumption~\ref{ass:kernel}. This inequality concludes the proof.
\end{proof}

\subsection{Contraction property on the attracting set}

Let $M\in(0,\infty)$. Introduce the set
\[
\mathcal{B}_M=\left\{B\in\mathcal{C}^0(\T^m,\R),~\int B(z)dz=0,~\|B\|_{\mathcal{C}^0}\le M \right\}.
\]
Owing to Proposition~\ref{propo:bound}, if $M\ge M^0$, then $A^\epsilon[\mu]\in\mathcal{B}_M$ for every $\mu\in\mathcal{P}(\T^d)$ and $\epsilon>0$.

Introduce the notation
\[
h_B(y,z)=\mathcal{Z}_B^{-1}e^{-V(y,z)+B(z)}\qquad\text{and}\qquad\tilde{\Pi}^\epsilon[h_B]=h_{A^{\epsilon}[\mu_B]}\,,
\]
so that $h_B$ and $\tilde{\Pi}^\epsilon[h_B]$ are the density with respect to the lebesgue measure of, respectively, $\mu_B$ and $\Pi^\epsilon[\mu_B]$.

To state the following result, the notation $\|h\|_2=\bigl(\int h(x)^2dx\bigr)^{\frac12}$ is used.
\begin{propo}\label{propo:contractionPitilde}
For every $M\in(0,\infty)$, there exists $C_M\in(0,\infty)$, such that for all $\epsilon>0$ and all $B^1,B^2\in\mathcal{B}_M$, one has
\[
\|\tilde{\Pi}^\epsilon[h_{B^1}]-\tilde{\Pi}^\epsilon[h_{B^2}]\|_2\le C_M\sqrt{\epsilon}\|h_{B^1}-h_{B^2}\|_2.
\]
\end{propo}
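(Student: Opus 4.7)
The strategy is to reduce the desired estimate to a bound on $\|A^\epsilon[\mu_{B^1}] - A^\epsilon[\mu_{B^2}]\|_{L^2(\T^m)}$ through standard elliptic regularity, and then to extract a factor of $\sqrt\epsilon$ from the map $B \mapsto F^\epsilon[\mu_B]$ by exploiting the second-moment bound on $K_\epsilon$ in Assumption~\ref{ass:kernel} through Cauchy--Schwarz. To implement the reduction, write $h_B(y,z) = e^{-V(y,z)+A_\star(z)}\, u_B(z)$ with $u_B(z):=\mathcal{Z}_B^{-1} e^{B(z)-A_\star(z)}$, the $z$-marginal density of $\mu_B$. The factor $e^{-V+A_\star}$ is bounded above and uniformly away from $0$ on $\T^d$, which yields on the one hand $\|u_{B^1}-u_{B^2}\|_{L^2(\T^m)} \le C\|h_{B^1}-h_{B^2}\|_2$, and on the other hand --- after noting that $A_i := A^\epsilon[\mu_{B^i}] \in \mathcal{B}_{M^0}$ by Proposition~\ref{propo:bound} and performing a direct Lipschitz computation for the map $A\mapsto u_A$ on $\mathcal{B}_{M^0}$ --- the bound $\|h_{A_1}-h_{A_2}\|_2 \le C_M \|A_1-A_2\|_{L^2(\T^m)}$. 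The standard $H^1$ energy estimate applied to $\Delta A = \mathrm{div}(F^\epsilon[\mu])$, combined with Poincar\'e on mean-zero $A$, then gives $\|A_1-A_2\|_{L^2} \le C\|F^\epsilon[\mu_{B^1}]-F^\epsilon[\mu_{B^2}]\|_{L^2}$. The proposition thus reduces to proving
\[
\|F^\epsilon[\mu_{B^1}]-F^\epsilon[\mu_{B^2}]\|_{L^2(\T^m)} \le C_M\sqrt\epsilon\, \|u_{B^1}-u_{B^2}\|_{L^2(\T^m)}.
\]

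To obtain this key estimate, apply Lemma~\ref{lem:identityF} and cancel the common factor $\mathcal{Z}_B$ in numerator and denominator to rewrite
\[
F^\epsilon[\mu_B](z) - \nabla A_\star(z) = \frac{\hat N_B(z)}{\hat D_B(z)},
\]
with $\hat N_B(z) := \int K_\epsilon(z',z)(\nabla A_\star(z')-\nabla A_\star(z))\, u_B(z')\,dz'$ and $\hat D_B(z) := \int K_\epsilon(z',z) u_B(z')\,dz'$. On $\mathcal{B}_M$ the density $u_B$ is bounded above and uniformly away from $0$, so $\hat D_B(z) \in [c_M,C_M]$ via $\int K_\epsilon(\cdot,z)\,dz' = 1$; a Cauchy--Schwarz argument using the Lipschitz continuity of $\nabla A_\star$ and Assumption~\ref{ass:kernel} gives $|\hat N_B(z)| \le C_M\sqrt{c_K\epsilon}$. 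Splitting
\[
F^\epsilon[\mu_{B^1}]-F^\epsilon[\mu_{B^2}] = \frac{\hat N_{B^1}-\hat N_{B^2}}{\hat D_{B^1}} - \frac{\hat N_{B^2}}{\hat D_{B^1}\hat D_{B^2}}(\hat D_{B^1}-\hat D_{B^2})
\]
and applying the same Cauchy--Schwarz trick to the differences $u_{B^1}-u_{B^2}$ produces
\[
|\hat N_{B^1}(z)-\hat N_{B^2}(z)| \le C\sqrt{c_K\epsilon}\, I(z)^{1/2}, \qquad |\hat D_{B^1}(z)-\hat D_{B^2}(z)| \le I(z)^{1/2},
\]
with $I(z) := \int K_\epsilon(z',z)(u_{B^1}-u_{B^2})^2(z')\,dz'$. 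Combining these yields $|F^\epsilon[\mu_{B^1}](z)-F^\epsilon[\mu_{B^2}](z)|^2 \le C_M^2\epsilon\, I(z)$. Integrating in $z$, invoking Fubini, and using once more the normalization $\int K_\epsilon(z',z)\,dz=1$ collapses the right-hand side to $C_M^2\epsilon \|u_{B^1}-u_{B^2}\|_{L^2(\T^m)}^2$, which is the desired estimate.

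The main obstacle is the dual role that $K_\epsilon$ must play in the Cauchy--Schwarz step: simultaneously providing the $\sqrt\epsilon$ factor through the second-moment bound $\int K_\epsilon(z',z)|z'-z|^2\,dz' \le c_K\epsilon$, and keeping the residual expressed as an average of $(u_{B^1}-u_{B^2})^2$ against $K_\epsilon(\cdot,z)$ rather than as a supremum, so that the final $z$-integration can exploit $\int K_\epsilon(z',z)\,dz=1$ to recover exactly $\|u_{B^1}-u_{B^2}\|_{L^2}^2$. A naive pointwise bound using $\|u_{B^1}-u_{B^2}\|_\infty$ would yield the correct $\sqrt\epsilon$ but the wrong norm, incompatible with the $L^2$ form of the target inequality.
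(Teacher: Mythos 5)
Your proof is correct, and it follows the same broad outline as the paper's (reduce via Proposition~\ref{propo:bound} to an estimate on $\|A^\epsilon[\mu_{B^1}]-A^\epsilon[\mu_{B^2}]\|_{L^2}$, then to $\|F^\epsilon[\mu_{B^1}]-F^\epsilon[\mu_{B^2}]\|_{L^2}$ via elliptic regularity and Poincar\'e, and then extract $\sqrt\epsilon$ from the second-moment bound on $K_\epsilon$ after writing $F^\epsilon[\mu_B]-\nabla A_\star$ in the form given by Lemma~\ref{lem:identityF}). The genuinely different, and arguably cleaner, ingredient is your choice of intermediate variable: you carry the normalized $z$-marginal density $u_B=\mathcal{Z}_B^{-1}e^{B-A_\star}$ through the computation, whereas the paper works with the unnormalized $e^{B}$ and must, at the end, prove $\int|e^{B^1}-e^{B^2}|^2\le C\|h_{B^1}-h_{B^2}\|_2^2$. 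That last inequality is not immediate (since the two sides involve different normalizations $\mathcal{Z}_{B^i}$) and the paper handles it with a Young-inequality absorption argument involving a small auxiliary parameter $\eta$. Your factorization $h_B=e^{-V+A_\star}u_B$ makes the corresponding estimate $\|u_{B^1}-u_{B^2}\|_{L^2}\le C\|h_{B^1}-h_{B^2}\|_2$ an immediate pointwise bound, because $e^{-V+A_\star}$ is bounded below on $\T^d$; the cancellation of $\mathcal{Z}_B$ inside the ratio $\hat N_B/\hat D_B$ is the key observation that lets you use the normalized quantity throughout. The trade-off is essentially nil: both proofs invoke the same uniform upper and lower bounds on the density (equivalently, on $u_B$ and on $\hat D_B$) coming from $\|B\|_{\mathcal C^0}\le M$ and Proposition~\ref{propo:bound}. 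Your Cauchy--Schwarz/Fubini treatment of $I(z)=\int K_\epsilon(z',z)(u_{B^1}-u_{B^2})^2(z')\,dz'$ plays the same role as the paper's use of Jensen on the probability kernel, and you correctly identify the subtle point that the residual must remain expressed as a $K_\epsilon$-weighted average (so that $\int K_\epsilon(z',z)\,dz=1$ can be used after integrating in $z$) rather than as a supremum. In short: same plan, different bookkeeping, and your bookkeeping avoids one nontrivial absorption step.
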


\begin{proof}
Let $B^1,B^2\in\mathcal{B}_M$. Using Proposition~\ref{propo:bound}, one has
\[
\|\tilde{\Pi}^\epsilon[h_{B^1}]-\tilde{\Pi}^\epsilon[h_{B^2}]\|_2=\|h_{A^{\epsilon}[\mu_{B^1}]}-h_{A^{\epsilon}[\mu_{B^2}]}\|_2\le C\|A^\epsilon[\mu_{B^1}]-A^\epsilon[\mu_{B^2}]\|_2.
\]
In addition, using the Poincar\'e inequality and the definition of $A^\epsilon[\mu]$ as the orthogonal projection in $L^2$ of $F^\epsilon[\mu]$, one has
\[
\|A^\epsilon[\mu_{B^1}]-A^\epsilon[\mu_{B^2}]\|_2\le C\|F^\epsilon[\mu_{B^1}]-F^\epsilon[\mu_{B^2}]\|_2.
\]
Then, using Lemma~\ref{lem:identityF}, one obtains, for all $z\in\T^m$,
\begin{align*}
|F^\epsilon[\mu_{B^1}](z)-&F^\epsilon[\mu_{B^2}](z)|=
\Big|\frac{\int \bigl(\nabla A_\star(z')-\nabla A_\star(z)\bigr)K_\epsilon(z',z)e^{B^1(z')-A_\star(z')}dz'}{\int K_\epsilon(z',z)e^{B^1(z')-A_\star(z')}dz'}\\
&\quad\quad\quad-\frac{\int \bigl(\nabla A_\star(z')-\nabla A_\star(z)\bigr)K_\epsilon(z',z)e^{B^2(z')-A_\star(z')}dz'}{\int K_\epsilon(z',z)e^{B^2(z')-A_\star(z')}dz'}\Big|\\
&\le \Big|\frac{\int \bigl(\nabla A_\star(z')-\nabla A_\star(z)\bigr)K_\epsilon(z',z)\Bigl(e^{B^1(z')}-e^{B^2(z')}\Bigr)e^{-A_\star(z')}dz'}{\int K_\epsilon(z',z)e^{B^1(z')-A_\star(z')}dz'}\Big|\\
&~+\Big|\frac{\int \bigl(\nabla A_\star(z')-\nabla A_\star(z)\bigr)K_\epsilon(z',z)e^{B^2(z')-A_\star(z')}dz' \int K_\epsilon(z',z)\Bigl(e^{B^1(z')}-e^{B^2(z')}\Bigr)e^{-A_\star(z')}dz'}{\int K_\epsilon(z',z)e^{B^1(z')-A_\star(z')}dz'\int K_\epsilon(z',z)e^{B^2(z')-A_\star(z')}dz'}\Big|\\
&\le Ce^{\|B^1\|_{\mathcal{C}^0(\T,\R)}}\int |z'-z|K_\epsilon(z',z)|e^{B^1(z')}-e^{B^2(z')}|dz'\\
&~+Ce^{\|B^1\|_{\mathcal{C}^0(\T,\R)}+2\|B^2\|_{\mathcal{C}^0(\T,\R)}}\int |z'-z|K_\epsilon(z',z)dz' \int K_\epsilon(z',z)|e^{B^1(z')}-e^{B^2(z')}|dz',
\end{align*}
using Lipschitz continuity of $\nabla A_\star$, and the lower bound
\[
\int K_\epsilon(z',z)e^{B^i(z')-A_\star(z')}dz'\ge e^{-\|B^i\|_{\mathcal{C}^0(\T,\R)}-\|A_\star\|_{\mathcal{C}^0(\T)}}\int K_\epsilon(z',z)dz'=e^{-\|B^i\|_{\mathcal{C}^0(\T,\R)}-\|A_\star\|_{\mathcal{C}^0(\T)}}.
\]
One has $\|B^1\|_{\mathcal{C}^0}\le M$ and $\|B^2\|_{\mathcal{C}^0}\le M$. In addition, owing to Assumption~\ref{ass:kernel}, one has $\int |z'-z|K_\epsilon(z',z)dz'\le C\sqrt{\epsilon}$. As a consequence, using the Jensen inequality (since $\int K_\epsilon(z',z)dz'=\int K_\epsilon(z,z')dz'=1$ for all $z$), one obtains
\begin{align*}
\|F^\epsilon[\mu_{B^1}]-F^\epsilon[\mu_{B^2}]\|_2&\le C_M\iint K_\epsilon(z',z)|z'-z|^2|e^{B_1(z')}-e^{B_2}(z')|^2dzdz'\\
&\quad+C_M\epsilon \iint K_\epsilon(z',z)|e^{B_1(z')}-e^{B_2}(z')|^2dzdz'\\
&\le C_M\epsilon \int|e^{B_1(z')}-e^{B_2(z')}|^2dz'.
\end{align*}
It remains to check that 
\[
\int|e^{B_1(z')}-e^{B_2}(z')|^2dz'\le C\|h_{B_1}-h_{B_2}\|_2^2.
\]
On the one hand,
\begin{align*}
\|h_{B_1}-h_{B_2}\|_2^2&=\iint e^{-2V(y,z)}\Big|\frac{e^{B_1(z)}}{\int e^{B_1-A_\star}}-\frac{e^{B_2}(z)}{\int e^{B_2-A_\star}}\Big|^2dydz\\
&\ge c\int \Big|\frac{e^{B_1(z)}}{\int e^{B_1-A_\star}}-\frac{e^{B_2}(z)}{\int e^{B_2-A_\star}}\Big|^2dz,
\end{align*}
with $c>0$. On the other hand, using Young inequality (with auxiliary parameter $\eta>0$), one obtains
\begin{align*}
\int|e^{B_1(z')}-e^{B_2}(z')|^2dz'&=\Big|\int e^{B_1-A_\star}\frac{e^{B_1(z)}}{\int e^{B_1-A_\star}}-\int e^{B_2-A_\star}\frac{e^{B_2}(z)}{\int e^{B_2-A_\star}}\Big|^2dz\\
&\le 2\eta^2\int \Big|\frac{e^{B_2}(z)}{\int e^{B_2-A_\star}}\Big|^2dz \Big|\int e^{B_1-A_\star}-\int e^{B_2-A_\star}\Big|^2\\
&~+\frac{2}{\eta^2}\bigl(\int e^{B_1-A_\star}\bigr)^2 \int \Big|\frac{e^{B_1(z)}}{\int e^{B_1-A_\star}}-\frac{e^{B_2}(z)}{\int e^{B_2-A_\star}}\Big|^2dz\\
&\le 2C_M\eta^2\int|e^{B_1(z')}-e^{B_2}(z')|^2dz'+\frac{2C_M}{\eta^2}\|h_{B_1}-h_{B_2}\|_2^2.
\end{align*}
Choosing a sufficiently small parameter $\eta$ one finally obtains the claim above.

Gathering the estimates finally concludes the proof of the estimate
\[
\|\tilde{\Pi}^\epsilon[h_{B^1}]-\tilde{\Pi}^\epsilon[h_{B^2}]\|_2\le C_M\sqrt{\epsilon}\|h_{B^1}-h_{B^2}\|_2.
\]
\end{proof}

\subsection{Proof of the main result}

The first part of this section is devoted to the construction of the candidate limits $\mu_\infty^\epsilon$ and $A_\infty^\epsilon=A^\epsilon[\mu_\infty^\epsilon]$, of $\mu_t$ and $A_t$ respectively, for small enough $\epsilon$.

Let $\bar \epsilon_0=1/(C_{M^{(0)}}^2+1)$, where $M=M^{(0)}$ is given by Proposition~\ref{propo:bound} and $C_M$ is given by Proposition~\ref{propo:contractionPitilde}.

Let $\epsilon\in(0,\bar \epsilon_0]$, and consider $A_{(0)}\in \mathcal{B}_{M^{(0)}}$. Define $\mu_{(0)}=\mu_{A_{(0)}}$, $h_{(0)}=h_{A_{(0)}}$, and by recursion, for all nonnegative integer $k$, let
\[
\mu_{(k+1)}=\Pi^\epsilon[\mu_{(k)}]~,\quad h_{(k+1)}=\tilde{\Pi}^{\epsilon}[h_{(k)}],
\]
and let $A_{(k)}=A^\epsilon[\mu_{(k)}]$. Then one has $h_{(k)}=h_{A_{(k)}}\in \mathcal{B}_{M^{(0)}}$. We claim that $\bigl(\mu_{(k)}\bigr)_{k\ge 0}$ is a Cauchy sequence in the space $\mathcal{P}(\T^d)$ equipped with the total variation distance $d_{\rm TV}$. Indeed, for all $k,\ell\ge 0$, one has
\begin{align*}
d_{\rm TV}(\mu_{(k)},\mu_{(k+\ell)})& \leqslant \|h_{(k)}-h_{(k+\ell)}\|_2\\
&\le \bigl(C_{M^{(0)}}\sqrt{\epsilon}\bigr)^kd_2(h^{(0)},h^{(\ell)})\\
&\le C\rho^k,
\end{align*}
with $\rho\in(0,1)$. As a consequence, there exists $\mu_\infty^\epsilon$ such that $d_{\rm TV}(\mu_{(k)},\mu_\infty^\epsilon)\underset{k\to\infty}\to 0$. Owing to Lemma~\ref{lem:Lip_PiEpsilon}, the mapping $\Pi^\epsilon$ is continuous on $\mathcal{P}(\T^d)$ equipped with $d_{\rm TV}$, thus $\mu_\infty^\epsilon=\Pi^\epsilon[\mu_\infty^\epsilon]$. This implies that $\mu_\infty^\epsilon=h_{A_\star^\epsilon}(x)dx$ where $A_\star^\epsilon=A^\epsilon[\mu_\infty^\epsilon]\in\mathcal{B}_{M^{(0)}}$.

It is then straightforward to check that $h_\infty^\epsilon=h_{A_\infty^\epsilon}$ is the unique fixed point of the mapping $\tilde{\Pi}^\epsilon$ (uniqueness is a consequence of Proposition~\ref{propo:contractionPitilde}).

We claim that, for any initial condition of the type $\mu_B$, then $\Phi^\epsilon(t,\mu_B)\underset{t\to\infty}\to \mu_\infty^\epsilon$, more precisely one has exponential convergence to the fixed point $\mu_\infty^\epsilon$: there exists $c(\epsilon)\in(0,\infty)$ such that, for all $t\ge 0$, one has
\begin{equation}\label{eq:claim_lastproof}
\underset{B\in\mathcal{B}_{M^{(0)}}}\sup~d_{\rm TV}(\Phi^\epsilon(t,\mu_B),\mu_\infty^\epsilon)\le Ce^{-c(\epsilon)t}.
\end{equation}
To prove this claim, observe that for all $t\ge 0$, the probability distribution $\Phi^\epsilon(t,\mu_B)$ can be written as $\mu_{B_t}$, where $B_t\in\mathcal{C}^0$, see Proposition~\ref{propo:attractor}, and without loss of generality $\int B_t(z)dz=0$. In addition, $B_t\in\mathcal{B}_{M^{(1)}}$, for all $t\ge 0$, for some $M^{(1)}\in(0,\infty)$ depending only on $M^{(0)}$: indeed, the identity
\[
h_{B_t}=e^{-t}h_{B_0}+\int_{0}^{t}e^{-(t-s)}\tilde{\Pi}^\epsilon[h_{B_s}]ds
\]
implies, using Proposition~\ref{propo:bound}, the bounds
\[
0<\underset{t\ge 0}\inf~\underset{x\in \T^d}\inf~h_{B_t}(x)\le \underset{t\ge 0}\sup~\underset{x\in \T^d}\sup~h_{B_t}(x)<\infty,
\]
and $B_t(z)$ is equal (up to an additive constant defined to respect the condition $\int B_t(z)dz=0$) to $A_\star(z)+\log\bigl(\int e^{-V(y,z)}dy)$.

Let $\epsilon_0=1/(C_{M^{(1)}}^2+1)$, and assume in the sequel that $\epsilon\in(0,\epsilon_0]$. Note that $M^{(1)}\ge M^{(0)}$, thus $\epsilon_0\le \bar \epsilon_0$.

Then $A_\infty^\epsilon$ is well-defined, $h_\infty^\epsilon$ is the unique fixed point of $\tilde{\Pi}^\epsilon$, and one obtains
\begin{align*}
d_{\rm TV}(\Phi^\epsilon(t,\mu_B),\mu_\infty^\epsilon)&\le \|h_{B_t}-h_{\infty}^\epsilon\|_2\\
&\le e^{-t}\|h_{B}-h_\infty^\epsilon\|_2+\int_{0}^{t}e^{-(t-s)}\|\tilde{\Pi}^\epsilon[h_{B_s}]-\tilde{\Pi}^{\epsilon}[h_\infty^\epsilon]\|_2ds\\
&\le e^{-t}\|h_{B}-h_\infty^\epsilon\|_2+C_{M^{(1)}}\sqrt{\epsilon}\|h_{B_s}-h_{\infty}^\epsilon\|_2 ds,
\end{align*}
with $C_{M^{(1)}}\sqrt{\epsilon}<1$. Applying the Gronwall Lemma, one obtains
\[
d_{\rm TV}(\Phi^\epsilon(t,\mu_B),\mu_\infty^\epsilon)\le \|h_{B_t}-h_{\infty}^\epsilon\|_2\le e^{-(1-C_{M^{(1)}}\sqrt{\epsilon})t}\|h_{B}-h_\infty^\epsilon\|_2,
\]
and it is straightforward to check that $ \sup~\{\|h_{B}-h_\infty^\epsilon\|_2,\ B\in\mathcal{B}_{M^{(0)}}\}<\infty$. This concludes the proof of the claim~\eqref{eq:claim_lastproof}.

We are now in position to prove give the proof of Theorem~\ref{theo:Main}. It is sufficient to focus on the question of convergence when $t\to\infty$, indeed the estimate for $\|A_\infty^\epsilon-\nabla A_\star\|_{W^{1,p}}$ is a straightforward consequence of Lemma~\ref{lem:distA}, combined with Proposition~\ref{propo:bound}, since $A_\infty^\epsilon$ is a fixed point of the mapping $A\mapsto A^\epsilon[\mu_A]$.

The idea of the proof, using concepts and tools developed in~\cite{B99} may be described as follows. Since almost surely $\bigl(\mu_t\bigr)_{t\ge 0}$ is an asymptotic pseudo-trajectory for the semi-flow $\Phi^\epsilon$, one has the following property: the limit set $L(\mu)$ is an attractor free set for the semi-flow $\Phi^\epsilon$ in $\mathcal{P}(\T^d)$, in particular it is invariant, {\it i.e.} for all $t\ge 0$ one has $\Phi^\epsilon(t,L(\mu))=L(\mu)$. Let us check that $L(\mu)=\{\mu_\infty^\epsilon\}$. First, introduce the set $\mathcal{M}=\left\{\mu_B\right\}$. Then Proposition~\ref{propo:attractor} provides the inclusion $L(\mu)\subset \mathcal{M}$. Indeed, let $\nu\in L(\mu)$ and let $t\ge 0$ be arbitrary, then by invariance there exists $\tilde{\nu}\in L(\mu)$ such that $\nu=\Phi^{\epsilon}(\tilde{\nu})$, thus $d(\nu,\mathcal{M})=d(\Phi^\epsilon(\tilde{\nu}),\mathcal{M})\le 2e^{-t}\underset{t\to\infty}\to 0$. Similarly, let $\nu\in L(\mu)\subset\mathcal{M}$, and let $t\ge 0$ be arbitrary, then $\nu=\Phi^{\epsilon}(\tilde{\nu})$ for some $\tilde{\nu}\in \mathcal{M}$. Thus $d(\nu,\mu_\infty^\epsilon)=d(\Phi^\epsilon(t,\tilde{\nu}),\Phi^\epsilon(t,\mu_\infty^\epsilon))\le Ce^{-ct}\underset{t\to\infty}\to 0$.

Let us now provide a detailed proof using only the results presented above.

\begin{proof}[Proof of Theorem~\ref{theo:Main}]

Let $T_1,T_2\in(0,\infty)$ be arbitrary positive real numbers, and $T=T_1+T_2$. For every $t\ge T$, one has
\begin{align*}
d_w\bigl(\mu_{e^t},\mu_\infty^\epsilon\bigr)&\le d_w\bigl(\mu_{e^t},\Phi^\epsilon(T,\mu_{e^{t-T}}\bigr)+d_w\bigl(\Phi^\epsilon(T,\mu_{e^{t-T}}),\mu_\infty^\epsilon\bigr).
\end{align*}
Owing to Theorem~\ref{theo:APT}, for any fixed $T_1,T_2$, one has, almost surely,
\[
d_w\bigl(\mu_{e^t},\Phi^\epsilon(T,\mu_{e^{t-T}})\bigr)\underset{t\to\infty}\to 0.
\]
Observe that $d_w(\cdot,\cdot)\le Cd_{\rm TV}(\cdot,\cdot)$. In addition, for all $B\in\mathcal{C}(\T,\R)$, using Lemma~\ref{lem:Lip_PiEpsilon} and the claim~\eqref{eq:claim_lastproof} above, one has
\begin{align*}
d_{\rm TV}\bigl(\Phi^\epsilon(T,\mu_{e^{t-T}}),\mu_\infty^\epsilon\bigr)&\le d_{\rm TV}\bigl(\Phi(T_1,\Phi(T_2,\mu_{e^{t-T}})),\Phi(T_1,\mu_{B})\bigr)+d_{\rm TV}\bigl(\Phi(T_1,\mu_B),\mu_\infty^\epsilon\bigr)\\
&\le e^{L(\epsilon)T_1}d_{\rm TV}\bigl(\Phi(T_2,\mu_{e^{t-T}}),\mu_B\bigr)+Ce^{-c(\epsilon)T_1}
\end{align*}
This implies that
\begin{align*}
d_{\rm TV}\bigl(\Phi^\epsilon(T,\mu_{e^{t-T}}),\mu_\infty^\epsilon\bigr)&\le e^{L(\epsilon)T_1}\underset{B\in\mathcal{C}(\T,\R)}\sup~d_{TV}(\Phi(T_2,\mu_{e^{t-T}}),\mu_B)+Ce^{-c(\epsilon)T_1}\\
&\le 2e^{L(\epsilon)T_1}e^{-T_2}+2e^{-c(\epsilon)T_1},
\end{align*}
owing to Proposition~\ref{propo:attractor}.
\[
\underset{t\to \infty}\limsup~d_{\rm TV}\bigl(\Phi^\epsilon(T,\mu_{e^{t-T}}),\mu_\star^\epsilon\bigr)\le 2e^{L(\epsilon)T_1}e^{-T_2}+2e^{-c(\epsilon)T_1}.
\]
Since $T_1$ and $T_2$ are arbitrary, letting first $T_2\to\infty$, then $T_1\to\infty$, one has almost surely
\[
\underset{t\to\infty}\limsup~d_w(\mu_{e^t},\mu_\star^\epsilon)=0,
\]
which concludes the proof of the weak convergence of $\mu_t$ to $\mu_\infty^\epsilon$.

It remains to check that $A_t=A^\epsilon[\mu_t]$ converges to $A_\infty^\epsilon=A^\epsilon[\mu_\infty^\epsilon]$, in $\mathcal{C}^k$, for all $k\in\N$. This is a consequence of the regularity properties of $K_\epsilon$ and of $V$, which proves that $\mu\in (\mathcal{P}(\T^d),d_w)\mapsto F^\epsilon[\mu]\in \mathcal{C}^k$ is continuous for all $k\in\N$.

Using Sobolev embedding properties, as in the proof of Lemma~\ref{lem:bounds-F_A}, then concludes the proof.
\end{proof}

\section*{Acknowledgments}

The authors would like to thank Pierre-Damien Thizy for pointing out the relevant reference~\cite{Ambrosio}, for the PDE estimate presented in Proposition~\ref{propo:PDE}. This work has been partially supported by the Project EFI ANR-17-CE40-0030 of the French National Research Agency and by the SNF grant 200021 - 175728.

\bibliographystyle{plain}

\end{document}